\author[N. J. Williams]{Nicholas J. Williams}
\address{Department of Mathematics and Statistics, Fylde College, Lancaster University, Lancaster, LA1 4YF, United Kingdom}
\email{nicholas.williams@lancaster.ac.uk}
\theoremstyle{plain}
 \newtheorem{theorem}{Theorem}[section]
 \newtheorem{proposition}[theorem]{Proposition}
 \newtheorem{corollary}[theorem]{Corollary}
 \newenvironment{customthm}[1]
  {\innercustomthm}
  {\endinnercustomthm}
\theoremstyle{definition}
 \newtheorem{remark}[theorem]{Remark}
 \newtheorem{example}[theorem]{Example}
\tikzset{
    dot diameter/.store in=\dot@diameter,
    dot diameter=3pt,
    dot spacing/.store in=\dot@spacing,
    dot spacing=10pt,
    dots/.style={
        line width=\dot@diameter,
        line cap=round,
        dash pattern=on 0pt off \dot@spacing
    }
}
\newcommand{\nonconsec}[2]{\prescript{\circlearrowleft}{}{\mathbf{I}_{#1}^{#2}}}
\newcommand{\modset}[2]{{\mathbf{I}_{#1}^{#2}}}
\newcommand{\tcs}[1]{\tabbedCenterstack{#1}}
\newcommand{\floor}[1]{\lfloor #1 \rfloor}
\newcommand{\ceil}[1]{\lceil #1 \rceil}
\DeclareMathOperator{\Hom}{Hom}
\DeclareMathOperator{\Ext}{Ext}
\DeclareMathOperator{\modules}{mod}
\DeclareMathOperator{\additive}{add}
\DeclareMathOperator{\gldim}{gl.dim}
\DeclareMathOperator{\conv}{conv}
\newcommand{\clus}{\mathcal{C}}
\newcommand{\orth}[1]{\prescript{\perp}{}{#1}}
\numberwithin{table}{section}
\numberwithin{figure}{section}
\title{The higher Stasheff--Tamari orders in representation theory}
\keywords{Cyclic polytopes, higher Auslander--Reiten theory, higher Stasheff--Tamari orders, triangulations, maximal green sequences}
\subjclass{Primary 05E10; secondary 06A07, 52B12}
\thanks{I would like to thank the scientific committee of ICRA 2020 for selecting me to give a talk based on my research snapshot on this subject, and consequently giving me the opportunity to contribute to the proceedings of the conference. I would also like to thank my supervisor Sibylle Schroll along with Jordan McMahon, Vic Reiner, Hugh Thomas, and Hipolito Treffinger for helpful input on the paper \cite{njw-hst}. Thank you finally to an anonymous referee for helpful comments. At the time I wrote the paper \cite{njw-hst}, I was supported by a studentship from the University of Leicester.}
\begin{document}




\begin{abstract}
We show that the relationship discovered by Oppermann and Thomas between triangulations of cyclic polytopes and the higher Auslander algebras of type $A$, denoted $A_{n}^{d}$, is an incredibly rich one. The \emph{higher Stasheff--Tamari orders} are two orders on triangulations of cyclic polytopes, conjectured to be equivalent, defined in the 1990s by Kapranov and Voevodsky, and Edelman and Reiner. We first show that these orders correspond in even dimensions to natural orders on tilting modules defined by Riedtmann and Schofield and studied by Happel and Unger. This result allows us to show that triangulations of odd-dimensional cyclic polytopes are in bijection with equivalence classes of $d$-maximal green sequences of $A_{n}^{d}$, which we introduce as a higher-dimensional generalisation of the original maximal green sequences of Keller. We further interpret the higher Stasheff--Tamari orders in odd dimensions, where they correspond to natural orders on equivalences classes of $d$-maximal green sequences. The conjecture that these two partial orders on equivalence classes of $d$-maximal green sequences are equal amounts to an oriented version of the ``no-gap'' conjecture of Br\"ustle, Dupont, and Perotin. A corollary of our results is that this conjecture holds for $A_{n}$, and that here the set of equivalence classes of (1-)maximal green sequences is a lattice.
\end{abstract}

\maketitle


\section{Introduction}

An essential tool for studying the representation theory of Artin algebras is Auslander--Reiten theory, developed by Auslander and Reiten in a series of seminal papers \cite{a1,a2,ar-stable,ar3,ar4,ar5,ar6}. In subsequent pioneering work, Iyama established a higher-dimensional version of their theory, extending many results \cite{iy-high-ar,iy-aus,iy-clus}, see also \cite{iy-revis}. Since then, higher Auslander--Reiten theory has proved a vibrant and fruitful area of research which has found connections with diverse areas of mathematics, including symplectic geometry \cite{djl}, non-commutative algebraic geometry \cite{himo}, and combinatorics \cite{ot,njw-hst}.

Higher Auslander--Reiten theory studies so-called $d$-cluster-tilting subcategories of $\modules \Lambda$, which behave like higher-dimensional analogues of abelian categories \cite{jasso}. If $\additive M$ is a $d$-cluster-tilting subcategory for a module $M$, then we say that $M$ is a \emph{$d$-cluster-tilting module}. If $\Lambda$ has a $d$-cluster-tilting module and $\gldim \Lambda \leqslant d$, then $\Lambda$ is called \emph{$d$-representation-finite $d$-hereditary}. These algebras are higher-dimensional versions of representation-finite hereditary algebras. The canonical examples of $d$-representation-finite algebras are the \emph{higher Auslander algebras of type $A$}, which were introduced by Iyama in \cite{iy-clus} and are denoted $A_{n}^{d}$. We denote the unique basic $d$-cluster-tilting module of $A_{n}^{d}$ by $M^{(d,n)}$.

It is the combinatorial connections of higher Auslander--Reiten theory that will interest us here. A beautiful paper of Oppermann and Thomas \cite{ot} shows that tilting $A_{n}^{d}$-modules lying inside $\additive M^{(d, n)}$ are in bijection with triangulations of the cyclic polytope $C(n + 2d, 2d)$. Cyclic polytopes are the analogues of convex polygons in higher dimensions \cite{gale}. For this reason, cyclic polytopes possess extremal combinatorial properties \cite{mcmullen} and are found in many areas of mathematics, such as higher category theory and algebraic $K$-theory \cite{kv-poly,dk-segal,poguntke,djw}, as well as game theory \cite{svs06,svs16}. In theoretical physics they appear as examples of amplituhedra \cite{amplituhedron}.

The set of triangulations of a cyclic polytope possesses additional structure. In 1991, Kapranov and Voevodsky defined a partial order on the set of triangulations of a cyclic polytope which they called the \emph{higher Stasheff order} \cite[Definition~3.3]{kv-poly}. Edelman and Reiner then developed this further by defining two partial orders on triangulations of cyclic polytopes, known as the \emph{first} ($\leqslant_{1}$) and \emph{second} ($\leqslant_{2}$) \emph{higher Stasheff--Tamari orders} \cite{er}, where the first higher Stasheff--Tamari order is the same as the higher Stasheff order \cite[Proposition~3.3]{thomas-bst}. In their paper, Edelman and Reiner conjectured the two higher Stasheff--Tamari orders to coincide. This conjecture has since been the subject of several papers \cite{rambau,err,thomas-bst,thomas,rr,njw-equal}.

For two-dimensional cyclic polytopes, both higher Stasheff--Tamari orders are equal to the Tamari lattice \cite{tamari_thesis,tamari}. This is a ubiquitous mathematical structure which arises in many fields, such as computer science, topology, and physics \cite{tamari-festschrift}. Indeed, it was shown by Buan and Krause \cite{buan-krause} that the poset of tilting modules for the path algebra of linearly oriented $A_{n}$ is isomorphic to the Tamari lattice, see also \cite{thomas_tamari}.

The present paper serves as an extended abstract for the paper \cite{njw-hst}, which contains complete proofs of the results outlined. In this paper we focus on sketching intuitive proofs of the results and giving examples. Our first result shows that the result of Buan and Krause is the $d = 1$ case of a general theorem concerning the higher Stasheff--Tamari orders and tilting modules for $A_{n}^{d}$. See Section~\ref{subsect-high-ar} for the definition of left mutation and the category $\orth{T}$.

\begin{customthm}{A}[Theorem~\ref{thm:even:first} and Theorem~\ref{thm:even:second}]\label{thm-int-even-alg}
Let $\mathcal{T}$ and $\mathcal{T}'$ be triangulations of a $2d$-dimensional cyclic polytope corresponding to tilting modules $T$ and $T'$ over $A_{n}^{d}$, the higher Auslander algebra of type $A$. We then have that
\begin{enumerate}
\item $\mathcal{T} \lessdot_{1} \mathcal{T}'$ if and only if $T'$ is a left mutation of $T$; and
\item $\mathcal{T} \leqslant_{2} \mathcal{T}'$ if and only if $\prescript{\bot}{}T \subseteq \prescript{\bot}{}T'$.
\end{enumerate}
\end{customthm}

This is an intriguing result because these orders on tilting modules are familiar. They were first considered for $d = 1$ in \cite{rs-simp} for finite-dimensional algebras over algebraically closed fields; for Artin algebras, they were proven to have the same Hasse diagram in \cite{hu-potm}. In particular, this proves for $d = 1$ that the two orders are the same when there are finitely many tilting modules. A higher-dimensional version of this result would prove the Edelman--Reiner conjecture in even dimensions.  Analogues of these orders have been studied for silting \cite{ai} and $\tau$-tilting \cite{air}.

There is an alternative algebraic framework from \cite{ot}, where triangulations of the $2d$-dimensional cyclic polytope with $n+2d+1$ vertices correspond to so-called cluster-tilting objects for $A_{n}^{d}$. Cluster-tilting objects are the analogues in cluster categories of tilting modules. We modify the cluster-tilting framework from \cite{ot} in order to accommodate partial orders on the cluster-tilting objects. There also exists a version of Theorem~\ref{thm-int-even-alg} in this framework, which is \cite[Theorem 3.13]{njw-hst}. 

In odd dimensions we show that triangulations of cyclic polytopes correspond to equivalence classes of $d$-maximal green sequences (Theorem~\ref{thm:max_green}). Maximal green sequences were introduced in the context of Donaldson--Thomas invariants in algebraic geometry \cite{kel-green}. We define higher-dim\-en\-sional \emph{$d$-maximal green sequences} as sequences of mutations of cluster-tilting objects from the projectives to the shifted projectives. We furthermore define an equivalence relation on these objects. The detail of these points can be found in Section~\ref{sect:odd}.

\begin{customthm}{B}[Theorem~\ref{thm:max_green}]\label{thm:int_odd_bij}
There is a bijection between equivalence classes of $d$-maximal green sequences of $A_{n}^{d}$ and triangulations of the cyclic polytope $C(n + 2d + 1, 2d + 1)$.
\end{customthm}

One may then ask what orders the higher Stasheff--Tamari orders correspond to under this bijection between triangulations and equivalence classes of $d$-maximal green sequences. The orders induced here are very natural, but have not been as widely studied as those on tilting modules \cite{gorsky_phd,gorsky_note,gorsky_forthcoming}. For the higher Auslander algebras of type~$A$, the Edelman--Reiner conjecture implies the ``no-gap'' conjecture made in \cite{bdp}, cases of which were proven in \cite{g-mc,hi-no-gap}. The algebraic description of the higher Stasheff--Tamari orders in odd dimensions is as follows. For the definition of the equivalence relation see Section~\ref{sect:odd}, and for the definition of an increasing elementary polygonal deformation see Section~\ref{odd-first}.

\begin{customthm}{C}[Theorem~\ref{thm:odd:first} and Theorem~\ref{thm:odd:second}]\label{thm-int-odd-alg}
Let $\mathcal{T}$ and $\mathcal{T}'$ be triangulations of a $(2d+1)$-dimensional cyclic polytope corresponding to equivalence classes of $d$-maximal green sequences $[G]$ and $[G']$ of $A_{n}^{d}$, the higher Auslander algebra of type $A$. We then have that
\begin{enumerate}
\item $\mathcal{T} \lessdot_{1} \mathcal{T}'$ if and only if $[G']$ is an increasing elementary polygonal deformation of $[G]$; and
\item $\mathcal{T} \leqslant_{2} \mathcal{T}'$ if and only if the set of summands of $[G]$ contains the set of summands of $[G']$.
\end{enumerate}
\end{customthm}

Theorems~\ref{thm:int_odd_bij} and \ref{thm-int-odd-alg} belong to the cluster-tilting framework, because $d$-maximal green sequences are defined in terms of cluster-tilting objects. As before, there are versions of Theorems~\ref{thm:int_odd_bij} and \ref{thm-int-odd-alg} in the tilting framework, which are \cite[Theorem 5.2, Theorem 5.6, Theorem 5.18]{njw-hst}. In this introduction we state Theorem~\ref{thm-int-even-alg} in the tilting framework and Theorem~\ref{thm-int-odd-alg} in the cluster-tilting framework, because these are the most natural frameworks for the respective theorems. A corollary of Theorem~\ref{thm-int-odd-alg} is that the set of equivalence classes of maximal green sequences of linearly oriented $A_{n}$ is a lattice (Corollary~\ref{cor-a-green-lat}). This is because in dimension 3 the two orders are known to be equivalent and known to be lattices \cite{er}.

The overall philosophy one extracts from these results is that cluster-theoretic phenomena are somehow even-dimensional, as are their represent\-a\-tion-theoretic parallels in the form of tilting modules, cluster-tilting objects, and the like. Indeed, one can note the connections that have been forged between representation theory \cite{hkk,lp,ops_geometric}, and higher Auslander--Reiten theory in particular \cite{djl}, and symplectic geometry, where it is essential to work in even dimensions. The odd-dimensional analogues of these cluster-theoretic phenomena are ($d$-)maximal green sequences, which themselves possess natural partial orders.

In Section~\ref{sect:background}, we give background on the higher Stasheff--Tamari orders and higher Auslander--Reiten theory. Then, in Section~\ref{sect:even}, we explain our results linking the even-dimensional higher Stasheff--Tamari orders with orders on tilting modules. Finally, in Section~\ref{sect:odd}, we explain our results linking the odd-dimensional higher Stasheff--Tamari orders with orders on equivalence classes of $d$-maximal green sequences.

\section{Background}\label{sect:background}

In this section we give necessary information on cyclic polytopes, higher Ausland\-er--Reiten theory, and the relation between them shown in \cite{ot}.

First, we set out some notational conventions. We use $[n]$ to denote the set $\{1, \dots, n\}$. Given a $(k + 1)$-tuple $A \in [n]^{k + 1}$, unless otherwise indicated, we shall denote the entries of $A$ by $A = (a_{0}, \dots, a_{k})$. The same applies to other letters of the alphabet: the upper case letter denotes the tuple; the lower case letter is used for the entries, which are numbered starting from zero.

\subsection{Cyclic polytopes}\label{back-cyc}

Cyclic polytopes should be thought of as higher-dimen\-sional analogues of convex polygons. General introductions to this class of polytopes can be found in \cite[Lecture 0]{ziegler} and \cite[4.7]{gruenbaum}.

Recall that subset $X \subset \mathbb{R}^{n}$ is \emph{convex} if for any $x,x' \in X$, the line segment connecting $x$ and $x'$ is contained in $X$. The \emph{convex hull} $\conv(X)$ of $X$ is the smallest convex set containing $X$ or, equivalently, the intersection of all convex sets containing $X$.

The \emph{moment curve} is defined by $p(t):=(t, t^{2}, \dots , t^{\delta}) \subset \mathbb{R}^{\delta}$ for $t \in \mathbb{R}$, where $\delta \in \mathbb{N}_{\geqslant 1}$. Choose $t_{1}, \dots , t_{m} \in \mathbb{R}$ such that $t_{1} < t_{2} < \dots < t_{m}$. The convex hull $\conv\{p(t_{1}), \dots , p(t_{m})\}$ is a \emph{cyclic polytope} $C(m, \delta)$. More, generally, for $H = (h_{0}, h_{1}, \dots, h_{k}) \in [m]^{k + 1}$, we write $C(H, \delta)$ for $\conv\{p(t_{h_{0}}), p(t_{h_{1}}), \dots, p(t_{h_{k}})\}$. There is a natural projection map from $C(m, \delta)$ to $C(m, \delta - 1)$ given by forgetting the last coordinate.

\begin{figure}
\caption{The cyclic polytopes $C(6,1), C(6,2), C(6,3)$ \cite[Figure 2]{er}}
\[
\begin{tikzpicture}[scale=0.8]
\coordinate(11) at (-4,0);
\coordinate(12) at (-2.4,0);
\coordinate(13) at (-0.8,0);
\coordinate(14) at (0.8,0);
\coordinate(15) at (2.4,0);
\coordinate(16) at (4,0);

\coordinate(21) at (-4,4.5);
\coordinate(22) at (-2.4,2.9);
\coordinate(23) at (-0.8,2.1);
\coordinate(24) at (0.8,2.1);
\coordinate(25) at (2.4,2.9);
\coordinate(26) at (4,4.5);

\coordinate(31) at (-4,9);
\coordinate(32) at (-2.4,7.4);
\coordinate(33) at (-0.8,6.6);
\coordinate(34) at (0.8,6.6);
\coordinate(35) at (2.4,7.4);
\coordinate(36) at (4,9);

\draw[->,thick] (0, 6) -- (0,5.1);
\draw[->,thick] (0, 1.5) -- (0,0.6);

\node(1) at (-6,0){$C(6, 3)$};
\node(2) at (-6,4.5){$C(6, 2)$};
\node(3) at (-6,9){$C(6, 1)$};

\draw (11) -- (12) -- (13) -- (14) -- (15) -- (16);

\draw (21) -- (22) -- (23) -- (24) -- (25) -- (26) -- (21);

\draw[dotted] (31) -- (33);
\draw[dotted] (31) -- (34);
\draw[dotted] (31) -- (35);
\draw (31) -- (32) -- (33) -- (34) -- (35) -- (36) -- (31);
\draw (36) -- (34);
\draw (36) -- (33);
\draw (36) -- (32);

\node at (11){$\bullet$};
\node at (12){$\bullet$};
\node at (13){$\bullet$};
\node at (14){$\bullet$};
\node at (15){$\bullet$};
\node at (16){$\bullet$};
\node at (21){$\bullet$};
\node at (22){$\bullet$};
\node at (23){$\bullet$};
\node at (24){$\bullet$};
\node at (25){$\bullet$};
\node at (26){$\bullet$};
\node at (31){$\bullet$};
\node at (32){$\bullet$};
\node at (33){$\bullet$};
\node at (34){$\bullet$};
\node at (35){$\bullet$};
\node at (36){$\bullet$};
\end{tikzpicture}
\]
\end{figure}
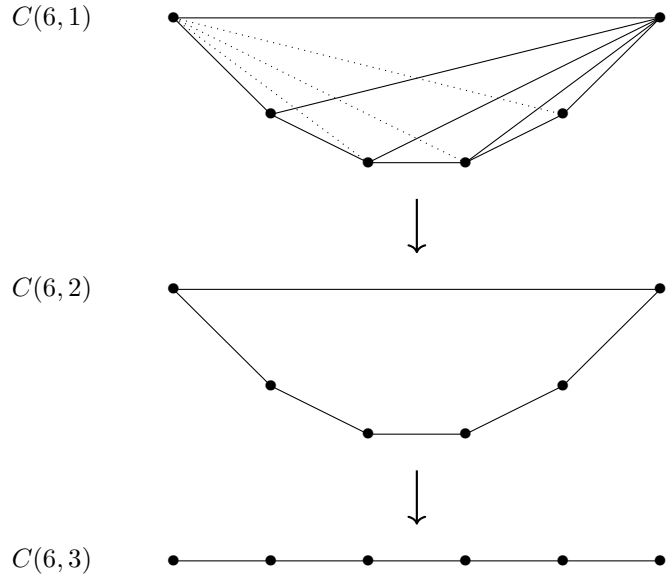

A \emph{triangulation} of a cyclic polytope $C(m, \delta)$ is a subdivision of $C(m, \delta)$ into $\delta$-dimensional simplices whose vertices are elements of $\{p(t_{1}), \dots, p(t_{m})\}$. We identify a triangulation of $C(m,\delta)$ with the corresponding set of $\delta$-simplices. We write $\mathcal{S}(m,\delta)$ for the set of triangulations of $C(m,\delta)$.

As explained in \cite{rambau}, whether or not a collection of increasing $(\delta+1)$-tuples with entries $\{t_{1}, \dots, t_{m}\}$ gives the collection of $\delta$-simplices of a triangulation is independent of the values of $t_{1} < \dots < t_{m}$ defining $C(m, \delta)$. We therefore fix $t_{i}=i \in \mathbb{R}$ throughout the paper and use $[m]$ as the vertex set of $C(m,\delta)$. One can thus describe $k$-dimensional simplices in $C(m, \delta)$ as increasing $(k + 1)$-tuples with elements in $[m]$, where $k \leqslant \delta$.  A triangulation can be specified by giving the collection of increasing $(\delta + 1)$-tuples corresponding to the $\delta$-simplices of the triangulation. Given an increasing $(k + 1)$-tuple $A \in [m]^{k + 1}$, we use $|A|_{\delta}$ to refer to the geometric $k$-simplex $\conv(A)$ in dimension $\delta$. When the dimension is clear, we will drop the subscript.

A \emph{facet} of $C(m,\delta)$ is a face of codimension one. The \emph{upper facets} of $C(m,\delta)$ are those that can be seen from a very large positive $\delta$-th coordinate. The \emph{lower facets} of $C(m,\delta)$ are those that can be seen from a very large negative $\delta$-th coordinate. Given an increasing $\delta$-tuple $F \in [m]^{\delta}$, there is a combinatorial criterion determining whether $|F|$ is an upper facet or a lower facet of $C(m, \delta)$ (or neither), known as \emph{Gale's Evenness Criterion} \cite[Theorem 3]{gale}\cite[Lemma 2.3]{er}. The collection of lower facets $\{|F|_{\delta+1}\}$ of $C(m,\delta+1)$ gives a triangulation $\{|F|_{\delta}\}$ of $C(m,\delta)$, known as the \emph{lower triangulation}. Said differently, the lower triangulation of $C(m, \delta)$ is the image of the lower facets of $C(m, \delta + 1)$ under the projection map from $C(m, \delta + 1)$ to $C(m, \delta)$. Likewise, the collection of upper facets of $C(m,\delta+1)$ gives a triangulation of $C(m,\delta)$ known as the \emph{upper triangulation}.

Indeed, every triangulation $\mathcal{T}$ of $C(m, \delta)$ determines a unique piecewise-linear section \[s_{\mathcal{T}}\colon C(m,\delta)\rightarrow C(m,\delta+1)\] of $C(m, \delta +1)$ by sending each $\delta$-simplex $|A|_{\delta}$ of $\mathcal{T}$ to $|A|_{\delta+1}$ in $C(m, \delta+1)$, in the natural way. This is a section---that is, a right inverse---of the projection map from $C(m, \delta + 1)$ to $C(m, \delta)$. A $\delta$-simplex $|A|$ in $C(m,\delta)$ similarly defines a map $s_{|A|}\colon |A|_{\delta} \rightarrow C(m,\delta+1)$. The \emph{second higher Stasheff--Tamari order} on $\mathcal{S}(m,\delta)$ is defined as \[ \mathcal{T} \leqslant_{2} \mathcal{T}' \iff s_{\mathcal{T}}(x)_{\delta+1} \leqslant s_{\mathcal{T}'}(x)_{\delta+1} ~ \forall x \in C(m, \delta),\] where $s_{\mathcal{T}}(x)_{\delta+1}$ denotes the $(\delta+1)$-th coordinate of the point $s_{\mathcal{T}}(x)$. We write $\mathcal{S}_{2}(m,\delta)$ for the poset on $\mathcal{S}(m,\delta)$ this gives.

\begin{figure}
\caption{A pair of triangulations of $C(5,1)$ such that $\mathcal{T} \leqslant_{2} \mathcal{T}'$}
\[
\begin{tikzpicture}



\coordinate (b1) at (-2,1.5);
\coordinate (b2) at (-1,1.5);
\coordinate (b3) at (0,1.5);
\coordinate (b4) at (1,1.5);
\coordinate (b5) at (2,1.5);


\draw[red] (b1) -- (b3) -- (b5);


\node at (b1) {$\bullet$};
\node at (b3) {$\bullet$};
\node at (b5) {$\bullet$};


\node at (-3,1.5) {$\mathcal{T}'$};



\coordinate (a1) at (-2,0.5);
\coordinate (a2) at (-1,0.5);
\coordinate (a3) at (0,0.5);
\coordinate (a4) at (1,0.5);
\coordinate (a5) at (2,0.5);


\draw[green] (a1) -- (a2) -- (a3) -- (a4) -- (a5);


\node at (a1) {$\bullet$};
\node at (a2) {$\bullet$};
\node at (a3) {$\bullet$};
\node at (a4) {$\bullet$};
\node at (a5) {$\bullet$};


\node at (-3,0.5) {$\mathcal{T}$};


\draw[->] (2.5, 1) -- (4, 1);



\coordinate (21) at (4, 2);
\coordinate (22) at (5, 0.5);
\coordinate (23) at (6, 0);
\coordinate (24) at (7, 0.5);
\coordinate (25) at (8, 2);


\draw[green] (21) -- (22) -- (23) -- (24) -- (25);


\draw[red] (21) -- (23) -- (25);


\draw (21) -- (25);


\node at (21) {$\bullet$};
\node at (22) {$\bullet$};
\node at (23) {$\bullet$};
\node at (24) {$\bullet$};
\node at (25) {$\bullet$};

\end{tikzpicture}
\]
\end{figure}
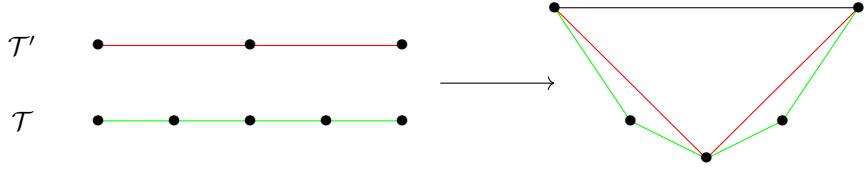

We also use the following different interpretation of the second higher Stasheff--Tamari order. A $k$-simplex $|A|$ in $C(m, \delta)$ is \emph{submerged} by the triangulation $\mathcal{T} \in \mathcal{S}(m, \delta)$ if the restriction of the piecewise linear section $s_{\mathcal{T}}$ to the simplex $|A|$ has the property that \[s_{|A|}(x)_{\delta+1} \leqslant s_{\mathcal{T}}(x)_{\delta+1}\] for all points $x \in |A|$. For a triangulation $\mathcal{T}$ of $C(m, \delta)$, the \emph{$k$-submersion set}, $\mathrm{sub}_{k}(\mathcal{T})$, is the set of $k$-simplices $|A|$ which are submerged by $\mathcal{T}$. Given two triangulations $\mathcal{T}, \mathcal{T}' \in \mathcal{S}(m,\delta)$, we have that $\mathcal{T} \leqslant_{2} \mathcal{T}'$ if and only if $\mathrm{sub}_{\lceil \frac{\delta}{2} \rceil}(\mathcal{T}) \subseteq \mathrm{sub}_{\lceil \frac{\delta}{2} \rceil}(\mathcal{T}')$ \cite[Proposition~2.15]{er}.

We now define the first higher Stasheff--Tamari order. Consider the cyclic polytope $C(\delta+2,\delta)$. Then any triangulation $\mathcal{T}$ of $C(\delta+2,\delta)$ determines a section $s_{\mathcal{T}}\colon C(\delta+2,\delta)\rightarrow C(\delta+2,\delta+1)$. But $C(\delta+2,\delta+1)$ is a simplex. It therefore has only one triangulation and only two sections: one corresponding to its upper facets and one corresponding to its lower facets. Hence the only two triangulations of $C(\delta+2,\delta)$ are the upper triangulation and the lower triangulation. For example, when $\delta=2$ the polytope $C(\delta+2,\delta)$ is a quadrilateral. This has two triangulations, corresponding to the two possible diagonals.

Let $\mathcal{T} \in \mathcal{S}(m,\delta)$. Suppose that there exists an increasing $(\delta + 2)$-tuple $H \in [m]^{\delta + 2}$ such that $\mathcal{T}$ restricts to the lower triangulation of $C(H,\delta)$. Let $\mathcal{T}'$ be the triangulation obtained by replacing the portion of $\mathcal{T}$ inside $C(H,\delta)$ with the upper triangulation of $C(H,\delta)$. We then say that $\mathcal{T}'$ is an \emph{increasing bistellar flip} of $\mathcal{T}$ and that $\mathcal{T}$ is a \emph{decreasing bistellar flip} of $\mathcal{T}'$. Figure~\ref{fig:flips} illustrates increasing bistellar flips for $\delta = 1, 2, 3$. Here the triangulations in the bottom row are the lower triangulations of $C(H, \delta)$ and the triangulations in the top row are the upper triangulations of $C(H, \delta)$. For $\delta = 1, 2$, we also illustrate the simplex $C(H, \delta + 1)$, with its two sections corresponding to the two different possible triangulations of $C(H, \delta)$.

\begin{figure}
\caption{Increasing bistellar flips}\label{fig:flips}
\[
\begin{tikzpicture}

\begin{scope}[scale=0.5,shift={(16,-2)}]

\begin{scope}[shift={(0,-6)}]


\coordinate(1a) at (-2.77,2.25);
\coordinate(2a) at (-1.77,0.75);
\coordinate(3a) at (-0.77,0.25);
\coordinate(4a) at (0.23,0.75);

\coordinate(1b) at (-1.75,2.5);
\coordinate(2b) at (-0.75,1);
\coordinate(4b) at (1.15,1);
\coordinate(5b) at (2.25,2.5);

\coordinate(2c) at (-0.25,0.25);
\coordinate(3c) at (0.75,-0.25);
\coordinate(4c) at (1.75,0.25);
\coordinate(5c) at (2.75,1.75);


\draw[fill=gray!30, draw=none] (1b) -- (2b) -- (5b) -- (1b);
\draw[fill=gray!30, draw=none] (2c) -- (5c) -- (3c) -- (2c);
\draw[fill=gray!30, draw=none] (3c) -- (4c) -- (5c) -- (3c);



\path[name path = line 1] (1a) -- (4a);


\path[name path = line 2] (1b) -- (2b);
\path[name path = line 3] (2b) -- (4b);


\path [name intersections={of = line 1 and line 2}];
\coordinate (a)  at (intersection-1);

\path [name intersections={of = line 1 and line 3}];
\coordinate (b)  at (intersection-1);


\draw[fill=red!30, draw=none] (1a) -- (a) -- (2b) -- (2a) -- (1a);
\draw[fill=red!30, draw=none] (2a) -- (2b) -- (b) -- (4a) -- (2a);
\draw[fill=red!30, draw=none] (2a) -- (4a) -- (3a) -- (2a);
\draw[fill=red!30, draw=none] (2b) -- (5b) -- (4b) -- (2b);


\draw (1a) -- (a);
\draw[dotted] (a) -- (b);
\draw (b) -- (4a);


\draw (1a) -- (2a);
\draw (2a) -- (3a);
\draw (3a) -- (4a);
\draw (2a) -- (4a);
\draw[dotted] (1a) -- (3a); 


\draw (1b) -- (5b);
\draw (1b) -- (2b);
\draw (2b) -- (4b);
\draw (4b) -- (5b);
\draw (2b) -- (5b);
\draw[dotted] (1b) -- (4b);


\draw (2c) -- (5c);
\draw (2c) -- (3c);
\draw (3c) -- (4c);
\draw (4c) -- (5c);
\draw (3c) -- (5c);
\draw[dotted] (2c) -- (4c);


\node at (1a) {$\bullet$};
\node at (2a) {$\bullet$};
\node at (3a) {$\bullet$};
\node at (4a) {$\bullet$};

\node at (1b) {$\bullet$};
\node at (2b) {$\bullet$};
\node at (4b) {$\bullet$};
\node at (5b) {$\bullet$};

\node at (2c) {$\bullet$};
\node at (3c) {$\bullet$};
\node at (4c) {$\bullet$};
\node at (5c) {$\bullet$};

\end{scope}

\begin{scope}[shift={(0,-2)}]

\node at (0,0) {4-simplex};

\end{scope}

\begin{scope}[shift={(0,0)}]


\coordinate(31f) at (-2.5,1.75);
\coordinate(32f) at (-1.5,0.2);
\coordinate(33f) at (-0.5,-0.25);
\coordinate(35f) at (1.5,1.75);

\coordinate(31b) at (-1.5,2.25);
\coordinate(33b) at (0.5,0.25);
\coordinate(34b) at (1.5,0.75);
\coordinate(35b) at (2.5,2.25);



\path[name path = line 2] (33f) -- (35f);
\path[name path = line 1] (35f) -- (31f);


\path[name path = line 3] (31b) -- (33b);


\path [name intersections={of = line 1 and line 3}];
\coordinate (a)  at (intersection-1);

\path [name intersections={of = line 2 and line 3}];
\coordinate (b)  at (intersection-1);


\draw[fill=gray!30, draw=none] (31f) -- (32f) -- (35f) -- (31f);
\draw[fill=gray!30, draw=none] (32f) -- (33f) -- (35f) -- (32f);
\draw[fill=gray!30, draw=none] (33b) -- (34b) -- (35b) -- (33b);


\draw[fill=green!30, draw=none] (31b) -- (a) -- (35f) -- (35b) -- (31b);
\draw[fill=green!30, draw=none] (33b) -- (b) -- (35f) -- (35b) -- (33b);



\draw (31b) -- (a);
\draw[dotted] (a) -- (b);
\draw (b) -- (33b);


\draw (31f) -- (32f);
\draw (32f) -- (33f);
\draw (33f) -- (35f);
\draw (35f) -- (31f);
\draw (32f) -- (35f);
\draw[dotted] (31f) -- (33f); 


\draw (33b) -- (34b);
\draw (34b) -- (35b);
\draw (35b) -- (31b);
\draw (33b) -- (35b);
\draw[dotted] (31b) -- (34b);


\node at (31f) {$\bullet$};
\node at (32f) {$\bullet$};
\node at (33f) {$\bullet$};
\node at (35f) {$\bullet$};
\node at (31b) {$\bullet$};
\node at (33b) {$\bullet$};
\node at (34b) {$\bullet$};
\node at (35b) {$\bullet$};

\end{scope}

\end{scope}

\begin{scope}[shift={(4.2,0)},scale=0.9]

\begin{scope}[shift={(0,0)}]


\coordinate (1) at (180:1);
\coordinate (2) at (240:1);
\coordinate (3) at (300:1);
\coordinate (4) at (0:1);


\draw (1) -- (2) -- (3) -- (4) -- (1);

\draw[green] (2) -- (4);


\node at (1) {$\bullet$};
\node at (2) {$\bullet$};
\node at (3) {$\bullet$};
\node at (4) {$\bullet$};

\end{scope}

\begin{scope}[shift={(0,-1.7)}]


\coordinate (1) at (180:1);
\coordinate (2) at (240:1);
\coordinate (3) at (300:1);
\coordinate (4) at (0:1);


\draw[draw=black,fill=gray!30] (1) -- (2) -- (3) -- (4) -- (1);

\draw[green] (2) -- (4);
\draw[red,dotted] (1) -- (3);


\node at (1) {$\bullet$};
\node at (2) {$\bullet$};
\node at (3) {$\bullet$};
\node at (4) {$\bullet$};

\end{scope}

\begin{scope}[shift={(0,-3.4)}]


\coordinate (1) at (180:1);
\coordinate (2) at (240:1);
\coordinate (3) at (300:1);
\coordinate (4) at (0:1);


\draw (1) -- (2) -- (3) -- (4) -- (1);

\draw[red] (1) -- (3);


\node at (1) {$\bullet$};
\node at (2) {$\bullet$};
\node at (3) {$\bullet$};
\node at (4) {$\bullet$};

\end{scope}

\end{scope}

\begin{scope}[shift={(0,0)}]

\begin{scope}[shift={(0,-0.25)}]


\coordinate(1) at (-1,0);
\coordinate(2) at (0,0);
\coordinate(3) at (1,0);


\draw[green] (1) -- (3);


\node at (1) {$\bullet$};
\node at (3) {$\bullet$};

\end{scope}

\begin{scope}[shift={(0,-1.5)}]


\coordinate(1) at (-1,0);
\coordinate(2) at (0,-1);
\coordinate(3) at (1,0);


\draw[green] (1) -- (3);
\draw[red] (1) -- (2) -- (3);


\node at (1) {$\bullet$};
\node at (2) {\color{red}$\bullet$};
\node at (3) {$\bullet$};

\end{scope}

\begin{scope}[shift={(0,-3.5)}]


\coordinate(1) at (-1,0);
\coordinate(2) at (0,0);
\coordinate(3) at (1,0);


\draw[red] (1) -- (3);


\node at (1) {$\bullet$};
\node at (2) {\color{red}$\bullet$};
\node at (3) {$\bullet$};

\end{scope}

\end{scope}

\end{tikzpicture}
\]
\end{figure}
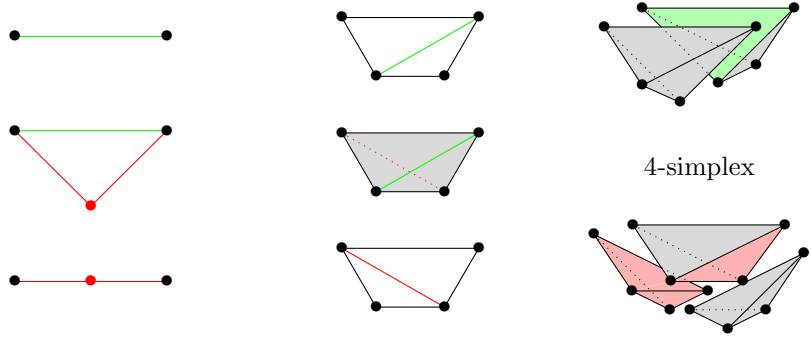

The covering relations of the \emph{first higher Stasheff--Tamari order} are that $\mathcal{T} \lessdot_{1} \mathcal{T}'$ if and only if $\mathcal{T}'$ is an increasing bistellar flip of $\mathcal{T}$. We write $\mathcal{S}_{1}(m,\delta)$ for the poset on $\mathcal{S}(m,\delta)$ this gives and $\leqslant_{1}$ for the partial order itself. The first higher Stasheff--Tamari order was originally introduced by Kapranov and Voevodsky in \cite[Definition~3.3]{kv-poly} as the ``higher Stasheff order'' using a slightly different definition. Thomas showed in \cite[Proposition~3.3]{thomas-bst} that the higher Stasheff order of Kapranov and Voevodsky was the same as the first higher Stasheff--Tamari order of Edelman and Reiner. General introductions to the higher Stasheff--Tamari orders can be found in \cite{rr} and \cite[Section~6.1]{lrs}.

\subsection{Higher Auslander--Reiten theory}\label{subsect-high-ar}

In this section let $\Lambda$ be a finite-dimen\-sional algebra over a field $K$. We denote by $\modules\Lambda$ the category of finite-dimensional right $\Lambda$-modules. Given a module $M \in \modules\Lambda$, $\additive M$ is the subcategory consisting of summands of direct sums of copies of $M$.

Given a subcategory $\mathcal{X} \subset \modules\Lambda$ and a map $f\colon X \rightarrow M$, where $X \in \mathcal{X}$ and $M \in \modules\Lambda$, we say that $f$ is a \emph{right $\mathcal{X}$-approximation} if for any $X' \in \mathcal{X}$, the sequence \[\Hom_{\Lambda}(X',X) \rightarrow \Hom_{\Lambda}(X',M)\rightarrow 0\] is exact, following \cite{as-preproj}. \emph{Left $\mathcal{X}$-approximations} are defined dually. The subcategory $\mathcal{X}$ is said to be \emph{contravariantly finite} if every $M \in \modules\Lambda$ admits a right $\mathcal{X}$-approximation, and \emph{covariantly finite} if every $M \in \modules\Lambda$ admits a left $\mathcal{X}$-approximation. If $\mathcal{X}$ is both contravariantly finite and covariantly finite, then $\mathcal{X}$ is \emph{functorially finite}.

Higher Auslander--Reiten theory was introduced by Iyama in \cite{iy-aus,iy-high-ar,iy-clus} as a higher-dimensional generalisation of classical Auslander--Reiten theory. For more background to the theory, see the papers \cite{jk-intro,jasso,gko,io,iy-revis}. The following subcategories provide the setting for higher Auslander--Reiten theory. A subcategory $\mathcal{M}$ of $\modules \Lambda$ is \emph{$d$-cluster-tilting} if it is functorially finite and
\begin{align*}
\mathcal{M} &= \{ X \in \modules\Lambda \mid \forall i \in [d-1],\, \forall M \in \mathcal{M},\, \Ext_{\Lambda}^{i}(X,M) = 0 \} \\
&= \{ X \in \modules\Lambda \mid \forall i \in [d-1],\, \forall M \in \mathcal{M},\, \Ext_{\Lambda}^{i}(M,X) = 0 \}.
\end{align*}
In the case $d=1$, the conditions should be interpreted as being trivial, so that $\modules\Lambda$ is the unique $1$-cluster-tilting subcategory of $\modules\Lambda$. If, for $M \in \modules\Lambda$, we have that $\additive M$ is a $d$-cluster-tilting subcategory, then we say that $M$ is a \emph{$d$-cluster-tilting module}.

We say that $\Lambda$ is \emph{weakly $d$-representation-finite} if there exists a $d$-cluster-tilting module in $\modules\Lambda$, following \cite[Definition~2.2]{io}. If, additionally, $\mathrm{gl.dim}\,\Lambda \leqslant d$, we say that $\Lambda$ is \emph{$d$-representation-finite $d$-hereditary}, following \cite[Definition~1.25]{jk-nak} and \cite{hio}. (These latter algebras are simply called ``$d$-representation-finite'' in \cite{io}.)

The canonical examples of $d$-representation-finite $d$-hereditary algebras are the higher Auslander algebras of linearly oriented $A_{n}$, introduced by Iyama in \cite{iy-clus}. The construction we give here is based on \cite[Construction~3.4]{ot}.

Following \cite{ot}, we denote the sets
\begin{align*}
\modset{m}{d} &:= \{(a_{0}, \dots , a_{d}) \in [m]^{d+1} \mid \forall i \in \{ 0, 1, \dots ,d-1 \}, a_{i+1}\geqslant a_{i}+2 \},\\
\nonconsec{m}{d} &:= \{(a_{0}, \dots , a_{d}) \in \modset{m}{d} \mid a_{d}+ 2 \leqslant a_{0} + m \}.
\end{align*}
Following \cite[Definition~2.2]{ot}, if $A, B \in [m]^{d + 1}$ are increasing $(d+1)$-tuples, then we say that $A$ \emph{intertwines} $B$, and write $A \wr B$, if \[a_{0}<b_{0}<a_{1}<b_{1}<\dots<a_{d}<b_{d}.\] If either $A \wr B$ or $B \wr A$, then we say that $A$ and $B$ are \emph{intertwining}. (That is, we use `are intertwining' to refer to the symmetric closure of the relation `intertwines'.)  A collection of increasing $(d + 1)$-tuples is called \emph{non-intertwining} if no pair of the elements are intertwining.

Let $Q^{(d,n)}$ be the quiver with vertices \[Q_{0}^{(d,n)} := \modset{n+2d-2}{d-1}\] and arrows \[Q_{1}^{(d,n)} := \{A \rightarrow A+e_{i} \mid A, A+e_{i} \in Q_{0}^{(d,n)}\},\] where \[e_{i}:= (0, \dots, 0, \overset{i}{1}, 0 , \dots , 0).\] We give examples of these quivers in Figure~\ref{fig:quiv_example}. We multiply arrows as if we were composing functions, so that $\xrightarrow{\alpha}\xrightarrow{\beta}\,=\beta\alpha$.

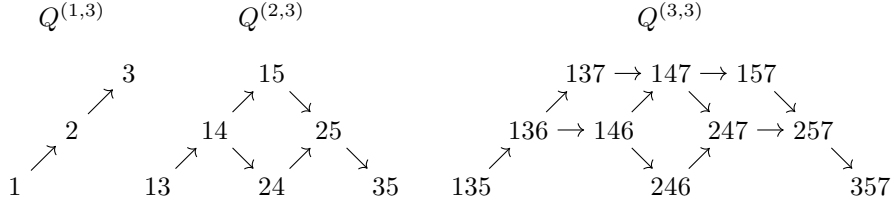
\begin{figure}
\caption{Examples of the quivers $Q^{(d, n)}$}\label{fig:quiv_example}
\[
\begin{tikzpicture}

\begin{scope}[scale=0.75]

\begin{scope}[shift={(-3,0)}]


\node(1) at (0,0) {1};
\node(2) at (1,1) {2};
\node(3) at (2,2) {3};

\draw[->] (1) -- (2);
\draw[->] (2) -- (3);


\node at (1,3) {$Q^{(1, 3)}$};

\begin{scope}[shift={(-0.5,0)}]


\node(13) at (3,0) {13};
\node(14) at (4,1) {14};
\node(15) at (5,2) {15};
\node(24) at (5,0) {24};
\node(25) at (6,1) {25};
\node(35) at (7,0) {35};

\draw[->] (13) -- (14);
\draw[->] (14) -- (15);
\draw[->] (14) -- (24);
\draw[->] (15) -- (25);
\draw[->] (24) -- (25);
\draw[->] (25) -- (35);


\node at (5,3) {$Q^{(2, 3)}$};

\end{scope}

\end{scope}

\begin{scope}[shift={(-3.5,0)}]


\node(135) at (8.5,0) {135};
\node(136) at (9.5,1) {136};
\node(137) at (10.5,2) {137};
\node(146) at (11,1) {146};
\node(147) at (12,2) {147};
\node(157) at (13.5,2) {157};
\node(246) at (12,0) {246};
\node(247) at (13,1) {247};
\node(257) at (14.5,1) {257};
\node(357) at (15.5,0) {357};

\draw[->] (135) -- (136);
\draw[->] (136) -- (137);
\draw[->] (136) -- (146);
\draw[->] (137) -- (147);
\draw[->] (146) -- (147);
\draw[->] (147) -- (157);
\draw[->] (146) -- (246);
\draw[->] (147) -- (247);
\draw[->] (157) -- (257);
\draw[->] (246) -- (247);
\draw[->] (247) -- (257);
\draw[->] (257) -- (357);


\node at (12,3) {$Q^{(3, 3)}$};

\end{scope}

\end{scope}

\end{tikzpicture}
\]
\end{figure}

Let $A_{n}^{d}$ be the quotient of the path algebra $KQ^{(d,n)}$ by the relations:
\begin{equation*}
A \rightarrow A+e_{i} \rightarrow A+e_{i}+e_{j} = 
\left\{
	\begin{array}{ll}
		A \rightarrow A+e_{j} \rightarrow A+e_{i}+e_{j} & \quad \text{if } A+e_{j} \in Q_{0}^{(d,n)} \\
		\hfil 0 & \quad \text{otherwise.}
	\end{array}
\right.
\end{equation*}

It is shown in \cite{iy-clus} that the algebra $A_{n}^{d}$ is $d$-representation-finite $d$-hereditary with unique basic $d$-cluster-tilting module $M^{(d,n)}$ and that \[ A_{n}^{d+1} \cong \mathrm{End}_{A_{n}^{d}}M^{(d,n)}.\]

Oppermann and Thomas give two different interpretations of triangulations of even-dimensional cyclic polytopes in the representation theory of $A_{n}^{d}$. In \cite[Section~3, Section 4]{ot} they give a bijection between triangulations and tilting modules; and in \cite[Section~5, Section~6]{ot}, they give a bijection between triangulations and cluster-tilting objects. Interesting results arise in both frameworks, so we consider both, although we shall need to modify the cluster-tilting framework from \cite{ot} in order to accommodate partial orders. For example, the tilting framework reveals connections with the orders on tilting modules studied in \cite{rs-simp,hu-potm}; and the cluster-tilting framework relates to maximal green sequences. We now explain the two different settings.

\subsubsection{Tilting}\label{sect-back-tilt}

Following \cite[Definition~2.3]{cps}, given a $\Lambda$-module $T$, we say that $T$ is a \emph{tilting} module if:
\begin{enumerate}
\item the projective dimension of $T$ is finite;
\item $\Ext_{\Lambda}^{i}(T,T)=0$ for all $i>0$;
\item there is an exact sequence $0 \rightarrow \Lambda \rightarrow T_{0} \rightarrow \dots \rightarrow T_{s} \rightarrow 0$ with each $T_{i} \in \additive T$.
\end{enumerate}

Given a $d$-representation-finite $d$-hereditary algebra $\Lambda$ with $d$-cluster-tilting module $M$ and tilting modules $T, T' \in \additive M$, we say that $T'$ is a \emph{left mutation} of $T$ if and only if $T=E \oplus X,$ $T'=E \oplus Y$ and there is an exact sequence \[0 \rightarrow X \rightarrow E_{1} \rightarrow \dots \rightarrow E_{d} \rightarrow Y \rightarrow 0\] where $X$ and $Y$ are indecomposable and $E_{i} \in \additive E$.

Given a tilting module $T \in \additive M^{(d,n)}$, we denote \[\prescript{\bot}{}T := \{ X \in \additive M^{(d,n)} \mid \Ext_{A_{n}^{d}}^{i}(X,T)=0, \: \forall i >0\}.\] The set $T^{\bot}$ is defined dually. Since $\mathrm{gl.dim}\,A_{n}^{d} \leqslant d$ and $\additive M^{(d,n)}$ is a $d$-cluster-tilting subcategory, we have that \[\prescript{\bot}{}T = \{ X \in \additive M^{(d,n)} \mid \Ext_{A_{n}^{d}}^{d}(X,T)=0\}.\]

By \cite[Theorem 1.1]{ot}, there are bijections between
\begin{itemize}
\item elements of $\modset{n+2d}{d}$,
\item $d$-simplices of $C(n+2d,2d)$ which do not lie in any lower facet, and
\item indecomposable direct summands of $M^{(d,n)}$.
\end{itemize}
These induce bijections between
\begin{itemize}
\item non-intertwining collections of $\binom{n+d-1}{d}$ increasing $(d + 1)$-tuples in $\modset{n+2d}{d}$,
\item triangulations of $C(n+2d,2d)$, and
\item basic tilting $A_{n}^{d}$-modules contained in $M^{(d,n)}$,
\end{itemize}
and also bijections between
\begin{itemize}
\item elements of $\nonconsec{n+2d}{d}$,
\item internal $d$-simplices of $C(n+2d,2d)$, and
\item indecomposable non-projective-injective direct summands of $M^{(d,n)}$.
\end{itemize}
Here an \emph{internal} simplex of $C(n + 2d, 2d)$ is one that does not lie in any facet of the polytope.

\begin{remark}
Hence, a triangulation of a $2d$-dimensional cyclic polytope can be understood as a collection of $d$-simplices, just as a triangulation of a convex polygon can be understood as a collection of arcs. While the two-dimensional case of this phenomenon is intuitive, the higher-dimensional case is less so. One can think about the higher-dimensional case as follows. Given a maximal-size collection of internal $d$-simplices whose vertex tuples do not intertwine, there is precisely one collection of $2d$-simplices which triangulate the cyclic polytope and which have the collection of internal $d$-simplices as their $d$-dimensional faces, along with the boundary $d$-simplices. For more details, see \cite[Lemma~2.15]{ot} and \cite{dey}.
\end{remark}

Given a triangulation $\mathcal{T} \in \mathcal{S}(n + 2d, 2d)$, we write $e(\mathcal{T})$ for the corresponding set of non-intertwining increasing $(d + 1)$-tuples in $\modset{n + 2d}{d}$, following \cite{ot}. Given an element $A \in \modset{n + 2d}{d}$, we use $M_{A}$ to refer to the corresponding indecomposable summand of $M^{(d,n)}$. The combinatorial bijection encodes homological properties of the category; namely, we have that $\Ext_{A_{n}^{d}}^{d}(M_{B}, M_{A}) \neq 0$ if and only if $A \wr B$.

\begin{example}\label{ex:back_tilt}
We first consider the example where $d = 1$ and $n = 3$. The bijection between $\modset{5}{1}$ and the indecomposables in $\additive M^{(1, 3)} = \modules A_{3}$ is shown in Figure~\ref{fig:mod_ar_quiv}. This bijection induces a bijection between triangulations of $C(5, 2)$ and basic tilting modules in $\modules A_{3}$, as shown in Figure~\ref{fig:mod_2d_triangs}. For example, the module $1$ is $M_{13}$, and so corresponds to the diagonal between vertex 1 and vertex 3 in Figure~\ref{fig:mod_2d_triangs}.

Now consider the example where $d = 2$ and $n = 3$. The algebra $A_{3}^{2}$ has the Auslander--Reiten quiver of $A_{3}$ as its quiver. To make the modules of this algebra easier to denote, we relabel the quiver \[
\begin{tikzpicture}

\begin{scope}[scale=0.75]

\node(13) at (-2,0) {1};
\node(14) at (-1,1) {2};
\node(15) at (0,2) {3};
\node(24) at (0,0) {4};
\node(25) at (1,1) {5};
\node(35) at (2,0) {6.};

\draw[->] (13) -- (14);
\draw[->] (14) -- (15);
\draw[->] (14) -- (24);
\draw[->] (15) -- (25);
\draw[->] (24) -- (25);
\draw[->] (25) -- (35);

\end{scope}

\end{tikzpicture}
\]
Figure~\ref{fig:2clus_ar_quiver} then shows the bijection between $\modset{7}{2}$ and the indecomposable modules of $\additive M^{(2, 3)}$. There are seven tilting modules in $\additive M^{(2, 3)}$, which correspond to the seven triangulations of $C(7, 4)$. This bijection is given in Table~\ref{tab:mod_4d_triangs}, where the triangulations $\mathcal{T}$ are described by their set $e(\mathcal{T})$ of $d$-simplices which do not lie in any lower facets of the polytope.
\end{example}

\begin{figure}
\caption{The Auslander--Reiten quiver of $\modules A_{3}$ and $\modset{5}{1}$.}\label{fig:mod_ar_quiv}
\[
\begin{tikzpicture}

\begin{scope}[scale=0.75,font=\tiny]

\begin{scope}[shift={(-3,0)}]


\node(1) at (0,0){$\tcs{1}$};
\node(21) at (1,1){$\tcs{2\\1}$};
\node(2) at (2,0){$\tcs{2}$};
\node(321) at (2,2){$\tcs{3\\2\\1}$};
\node(32) at (3,1){$\tcs{3\\2}$};
\node(3) at (4,0){$\tcs{3}$};


\draw[->] (1) -- (21);
\draw[->] (21) -- (2);
\draw[->] (2) -- (32);
\draw[->] (32) -- (3);
\draw[->] (21) -- (321);
\draw[->] (321) -- (32);

\end{scope}


\begin{scope}[shift={(3,0)}]


\node(1) at (0,0){$M_{13}$};
\node(21) at (1,1){$M_{14}$};
\node(2) at (2,0){$M_{24}$};
\node(321) at (2,2){$M_{15}$};
\node(32) at (3,1){$M_{25}$};
\node(3) at (4,0){$M_{35}$};


\draw[->] (1) -- (21);
\draw[->] (21) -- (2);
\draw[->] (2) -- (32);
\draw[->] (32) -- (3);
\draw[->] (21) -- (321);
\draw[->] (321) -- (32);

\end{scope}

\end{scope}

\end{tikzpicture}
\]
\end{figure}

\begin{figure}
\caption{Tilting modules in $\modules A_{3}$ and their corresponding triangulations.}\label{fig:mod_2d_triangs}
\[
\begin{tikzpicture}


\begin{scope}[shift={(-5,0)},scale=0.5,font=\tiny]


\coordinate(1) at (-2,2);
\coordinate(2) at (-1,0.5);
\coordinate(3) at (0,0);
\coordinate(4) at (1,0.5);
\coordinate(5) at (2,2);


\node(v1) at (1) {$\bullet$};
\node(v2) at (2) {$\bullet$};
\node(v3) at (3) {$\bullet$};
\node(v4) at (4) {$\bullet$};
\node(v5) at (5) {$\bullet$};


\node(mod2) at (0.5*-2 + 0.5*0, 0.5*2 + 0.5*0) {\color{red}$\tcs{1}$};
\node(mod12) at (0.5*-2 + 0.5*1, 0.5*2 + 0.5*0.5) {\color{red}$\tcs{2\\1}$};
\node(mod321) at ($(1)!0.5!(5)$) {\color{red} $\tcs{3\\2\\1}$};


\draw (1) -- (2) -- (3) -- (4) -- (5) -- (mod321) -- (1);
\draw[blue] (1) -- (mod2) -- (3);
\draw[blue] (1) -- (mod12) -- (4);

\end{scope}


\begin{scope}[shift={(-2.5,0)},scale=0.5,font=\tiny]


\coordinate(1) at (-2,2);
\coordinate(2) at (-1,0.5);
\coordinate(3) at (0,0);
\coordinate(4) at (1,0.5);
\coordinate(5) at (2,2);


\node(v1) at (1) {$\bullet$};
\node(v2) at (2) {$\bullet$};
\node(v3) at (3) {$\bullet$};
\node(v4) at (4) {$\bullet$};
\node(v5) at (5) {$\bullet$};


\node(mod1) at (0.5*-1 + 0.5*1, 0.5*0.5 + 0.5*0.5) {\color{red}$\tcs{2}$};
\node(mod12) at (0.5*-2 + 0.5*1, 0.5*2 + 0.5*0.5) {\color{red}$\tcs{2\\1}$};
\node(mod321) at ($(1)!0.5!(5)$) {\color{red} $\tcs{3\\2\\1}$};


\draw (1) -- (2) -- (3) -- (4) -- (5) -- (mod321) -- (1);
\draw[blue] (2) -- (mod1) -- (4);
\draw[blue] (1) -- (mod12) -- (4);

\end{scope}


\begin{scope}[shift={(0,0)},scale=0.5,font=\tiny]


\coordinate(1) at (-2,2);
\coordinate(2) at (-1,0.5);
\coordinate(3) at (0,0);
\coordinate(4) at (1,0.5);
\coordinate(5) at (2,2);


\node(v1) at (1) {$\bullet$};
\node(v2) at (2) {$\bullet$};
\node(v3) at (3) {$\bullet$};
\node(v4) at (4) {$\bullet$};
\node(v5) at (5) {$\bullet$};


\node(mod2) at (0.5*-2 + 0.5*0, 0.5*2 + 0.5*0) {\color{red}$\tcs{1}$};
\node(mod121) at (0.4*2 + 0.6*0, 0.4*2 + 0.6*0) {\color{red}$\tcs{3}$};
\node(mod321) at ($(1)!0.5!(5)$) {\color{red} $\tcs{3\\2\\1}$};


\draw (1) -- (2) -- (3) -- (4) -- (5) -- (mod321) -- (1);
\draw[blue] (1) -- (mod2) -- (3);
\draw[blue] (3) -- (mod121) -- (5);

\end{scope}


\begin{scope}[shift={(2.5,0)},scale=0.5,font=\tiny]


\coordinate(1) at (-2,2);
\coordinate(2) at (-1,0.5);
\coordinate(3) at (0,0);
\coordinate(4) at (1,0.5);
\coordinate(5) at (2,2);


\node(v1) at (1) {$\bullet$};
\node(v2) at (2) {$\bullet$};
\node(v3) at (3) {$\bullet$};
\node(v4) at (4) {$\bullet$};
\node(v5) at (5) {$\bullet$};


\node(mod1) at (0.5*-1 + 0.5*1, 0.5*0.5 + 0.5*0.5) {\color{red}$\tcs{2}$};
\node(mod21) at (0.5*-1 + 0.5*2, 0.5*0.5 + 0.5*2) {\color{red}$\tcs{3\\2}$};
\node(mod321) at ($(1)!0.5!(5)$) {\color{red} $\tcs{3\\2\\1}$};


\draw (1) -- (2) -- (3) -- (4) -- (5) -- (mod321) -- (1);
\draw[blue] (2) -- (mod1) -- (4);
\draw[blue] (2) -- (mod21) -- (5);

\end{scope}


\begin{scope}[shift={(5,0)},scale=0.5,font=\tiny]


\coordinate(1) at (-2,2);
\coordinate(2) at (-1,0.5);
\coordinate(3) at (0,0);
\coordinate(4) at (1,0.5);
\coordinate(5) at (2,2);


\node(v1) at (1) {$\bullet$};
\node(v2) at (2) {$\bullet$};
\node(v3) at (3) {$\bullet$};
\node(v4) at (4) {$\bullet$};
\node(v5) at (5) {$\bullet$};


\node(mod21) at (0.7*-1 + 0.3*2, 0.7*0.5 + 0.3*2) {\color{red}$\tcs{3\\2}$};
\node(mod121) at (0.4*2 + 0.6*0, 0.4*2 + 0.6*0) {\color{red}$\tcs{3}$};
\node(mod321) at ($(1)!0.5!(5)$) {\color{red} $\tcs{3\\2\\1}$};


\draw (1) -- (2) -- (3) -- (4) -- (5) -- (mod321) -- (1);
\draw[blue] (2) -- (mod21) -- (5);
\draw[blue] (3) -- (mod121) -- (5);

\end{scope}

\end{tikzpicture}
\]
\end{figure}
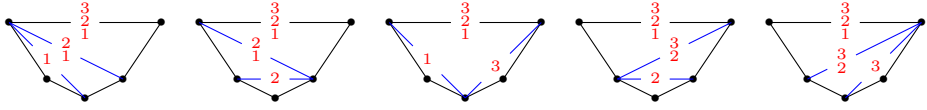

\begin{figure}
\caption{The Auslander--Reiten quiver of $\additive M^{(2,3)}$ and $\modset{7}{2}$.}\label{fig:2clus_ar_quiver}
\[
\begin{tikzpicture}


\begin{scope}[scale=0.75,font=\tiny]


\begin{scope}[shift={(-4,0)}]

\node(135) at (-3.5,0) {\tcs{1}};
\node(136) at (-2.5,1) {\tcs{2\\1}};
\node(137) at (-1.5,2) {\tcs{3\\2\\1}};
\node(146) at (-1,1) {\tcs{4\\2}};
\node(147) at (0,2) {\tcs{5\\4~3\\2}};
\node(157) at (1.5,2) {\tcs{6\\5\\3}};
\node(246) at (0,0) {\tcs{4}};
\node(247) at (1,1) {\tcs{5\\4}};
\node(257) at (2.5,1) {\tcs{6\\5}};
\node(357) at (3.5,0) {\tcs{6}};

\draw[->] (135) -- (136);
\draw[->] (136) -- (137);
\draw[->] (136) -- (146);
\draw[->] (137) -- (147);
\draw[->] (146) -- (147);
\draw[->] (147) -- (157);
\draw[->] (146) -- (246);
\draw[->] (147) -- (247);
\draw[->] (157) -- (257);
\draw[->] (246) -- (247);
\draw[->] (247) -- (257);
\draw[->] (257) -- (357);

\end{scope}



\begin{scope}[shift={(4,0)}]

\node(135) at (-3.5,0) {$M_{135}$};
\node(136) at (-2.5,1) {$M_{136}$};
\node(137) at (-1.5,2) {$M_{137}$};
\node(146) at (-1,1) {$M_{146}$};
\node(147) at (0,2) {$M_{147}$};
\node(157) at (1.5,2) {$M_{157}$};
\node(246) at (0,0) {$M_{246}$};
\node(247) at (1,1) {$M_{247}$};
\node(257) at (2.5,1) {$M_{257}$};
\node(357) at (3.5,0) {$M_{357}$};

\draw[->] (135) -- (136);
\draw[->] (136) -- (137);
\draw[->] (136) -- (146);
\draw[->] (137) -- (147);
\draw[->] (146) -- (147);
\draw[->] (147) -- (157);
\draw[->] (146) -- (246);
\draw[->] (147) -- (247);
\draw[->] (157) -- (257);
\draw[->] (246) -- (247);
\draw[->] (247) -- (257);
\draw[->] (257) -- (357);

\end{scope}


\end{scope}

\end{tikzpicture}
\]
\end{figure}

\begin{table}
\caption{Tilting modules in $\additive M^{(2, 3)}$ and their corresponding triangulations.}\label{tab:mod_4d_triangs}
\begin{tabularx}{\textwidth}{
  >{\centering\arraybackslash}X 
  | >{\centering\arraybackslash}X
  | >{\centering\arraybackslash}X}
Tilting module & Triangulation & $d$-simplices not in lower facets\\
\hline
$\tcs{1} \oplus \tcs{2\\1} \oplus \tcs{4\\2} \oplus \tcs{3\\2\\1} \oplus \tcs{5\\4~3\\2} \oplus \tcs{6\\5\\3}$ & $\{|12345|, |12356|, |13456|,$ $|12367|, |13467|, |14567|\}$ & $\{135, 136, 146,$ $137, 147, 157\}$\\
$\tcs{4} \oplus \tcs{2\\1} \oplus \tcs{4\\2} \oplus \tcs{3\\2\\1} \oplus \tcs{5\\4~3\\2} \oplus \tcs{6\\5\\3}$ & $\{|12346|, |12456|, |23456|,$ $|12367|, |13467|, |14567|\}$ & $\{246, 136, 146,$ $137, 147, 157\}$\\
$\tcs{4} \oplus \tcs{5\\4} \oplus \tcs{4\\2} \oplus \tcs{3\\2\\1} \oplus \tcs{5\\4~3\\2} \oplus \tcs{6\\5\\3}$ & $\{|12347|, |12456|, |23456|,$ $|12467|, |23467|, |14567|\}$ & $\{246, 247, 146,$ $137, 147, 157\}$\\
$\tcs{4} \oplus \tcs{5\\4} \oplus \tcs{6\\5} \oplus \tcs{3\\2\\1} \oplus \tcs{5\\4~3\\2} \oplus \tcs{6\\5\\3}$ & $\{|12347|, |12457|, |23456|,$ $|12567|, |23467|, |24567|\}$ & $\{246, 247, 257,$ $137, 147, 157\}$\\
$\tcs{1} \oplus \tcs{2\\1} \oplus \tcs{6} \oplus \tcs{3\\2\\1} \oplus \tcs{5\\4~3\\2} \oplus \tcs{6\\5\\3}$ & $\{|12345|, |12356|, |13457|,$ $|12367|, |13567|, |34567|\}$ & $\{135, 136, 357,$ $137, 147, 157\}$\\
$\tcs{1} \oplus \tcs{6\\5} \oplus \tcs{6} \oplus \tcs{3\\2\\1} \oplus \tcs{5\\4~3\\2} \oplus \tcs{6\\5\\3}$ & $\{|12345|, |12357|, |13457|,$ $|12567|, |23567|, |34567|\}$ & $\{135, 257, 357,$ $137, 147, 157\}$\\
$\tcs{6} \oplus \tcs{5\\4} \oplus \tcs{6\\5} \oplus \tcs{3\\2\\1} \oplus \tcs{5\\4~3\\2} \oplus \tcs{6\\5\\3}$ & $\{|12347|, |12457|, |23457|,$ $|12567|, |23567|, |34567|\}$ & $\{357, 247, 257,$ $137, 147, 157\}$
\end{tabularx}
\end{table}

\subsubsection{Cluster-tilting}\label{sect-back-clus-tilt}

Let $\Lambda$ be a $d$-representation-finite $d$-hereditary algebra with $d$-cluster-tilting module $M$. Let $\mathcal{D}_{\Lambda}:=D^{b}(\modules\Lambda)$ be the bounded derived category of finitely generated $\Lambda$-modules. We denote the shift functor in the derived category by $[1]$ and its $d$-th power by $[d]:=[1]^{d}$. Let \[\mathcal{U}_{\Lambda}:=\additive\{M[id] \in \mathcal{D}_{\Lambda} \mid i \in \mathbb{Z}\}\] be a subcategory of $\mathcal{D}_{\Lambda}$. This is a $(d+2)$-angulated category in the sense of \cite{gko}. Consider the subcategory $\clus_{\Lambda}:=\additive(M \oplus \Lambda[d])$ of $\mathcal{U}_{\Lambda}$. We say that $T \in \clus_{\Lambda}$ is a \emph{(basic) cluster-tilting object} if $\Hom_{\mathcal{U}_{\Lambda}}(T, T[d]) = 0$ and if $T$ has $m$ indecomposable summands which are pairwise non-isomorphic, where $m$ is the number of indecomposable summands of $\Lambda$ as a $\Lambda$-module. Note that both $\Lambda$ and $\Lambda[d]$ are cluster-tilting objects. 

Our set-up for cluster-tilting is different from that of \cite{ot} because we wish to consider orders on triangulations and cluster-tilting objects. Hence we consider $\clus_{\Lambda}$ rather than the higher cluster category $\mathcal{O}_{A_{n}^{d}}$ they define, which is $2d$-Calabi--Yau. We retain the term ``cluster-tilting'' to refer to the relevant objects, following, for example, \cite{hi-no-gap}.

Given two cluster-tilting objects $T,T' \in \clus_{\Lambda}$, we say that $T'$ is a \emph{left mutation} of $T$ if $T=E \oplus X$, $T'=E \oplus Y$ and there exists a $(d+2)$-angle \[X \rightarrow E_{1} \rightarrow \dots \rightarrow E_{d} \rightarrow Y \rightarrow X[d].\]

Similarly to before, given a cluster-tilting object $T \in \clus_{A_{n}^{d}}$, we denote 
\begin{align*}
\prescript{\bot}{}T &:= \{ X \in \clus_{A_{n}^{d}} \mid \Hom_{\mathcal{D}_{A_{n}^{d}}}(X,T[i])=0, \: \forall i >0\}\\
&=\{ X \in \clus_{A_{n}^{d}} \mid \Hom_{\mathcal{D}_{A_{n}^{d}}}(X,T[d])=0\}.
\end{align*}

By \cite[Theorem 1.1, Theorem 1.2]{ot} there are bijections between:
\begin{itemize}
\item elements of $\nonconsec{n+2d+1}{d}$,
\item internal $d$-simplices of $C(n+2d+1,2d)$,
\item indecomposable objects in $\clus_{A_{n}^{d}}$, and
\item indecomposable non-projective-injective direct summands of $M^{(d,n+1)}$.
\end{itemize}
These induce bijections between:
\begin{itemize}
\item non-intertwining sets of $\binom{n+d-1}{d}$ increasing $(d + 1)$-tuples in $\nonconsec{n+2d+1}{d}$,
\item triangulations of $C(n+2d+1,2d)$,
\item basic cluster-tilting objects in $\clus_{A_{n}^{d}}$, and
\item basic tilting $A_{n+1}^{d}$-modules contained in $M^{(d,n+1)}$.
\end{itemize}
As before, the combinatorics encodes extensions between objects, in that we have $\Hom_{\mathcal{U}_{A_{n}^{d}}}(M_{B}, M_{A}[d]) \neq 0$ if and only if $A \wr B$.

\begin{remark}
The reason that the two different frameworks behave so similarly is that the quotient of $\additive M^{(d, n + 1)}$ by the projective-injectives is equivalent to $\clus_{A_{n}^{d}}$ as a $(d + 2)$-exangulated category \cite{np,hln}.
\end{remark}

\begin{example}
We first consider the example where $d = 1$ and $n = 2$. The bijection between $\nonconsec{5}{1}$ and the indecomposables in $\clus_{A_{2}}$ is shown in Figure~\ref{fig:clus_ar_quiv}. This bijection induces a bijection between triangulations of $C(5, 2)$ and basic cluster-tilting objects in $\clus_{A_{2}}$, as shown in Figure~\ref{fig:2d_triangs}.

Now consider the example where $d = 2$ and $n = 2$. The algebra $A_{2}^{2}$ has the Auslander--Reiten quiver of $A_{2}$ as its quiver. To make the modules of this algebra easier to denote, we relabel the quiver \[
\begin{tikzpicture}

\begin{scope}[scale=0.75]

\node(13) at (-2,0) {1};
\node(14) at (-1,1) {2};
\node(24) at (0,0) {3.};

\draw[->] (13) -- (14);
\draw[->] (14) -- (24);

\end{scope}

\end{tikzpicture}
\]
Figure~\ref{fig:2clus_cat} then shows the bijection between $\nonconsec{7}{2}$ and the indecomposables of $\clus_{A_{2}^{2}}$. There are seven cluster-tilting objects in $\clus_{A_{2}^{2}}$, which correspond to the seven triangulations of $C(7, 4)$. This bijection is given in Table~\ref{tab:clus_4d_triangs}, where the triangulations are described by their set of internal $d$-simplices.

Compare these examples with Example~\ref{ex:back_tilt}. The difference is that in the cluster-tilting case, the indecomposable objects are now in bijection with the internal simplices of the triangulation. We no longer have indecomposable objects which correspond to simplices in the upper facets of the cyclic polytope. In the tilting case, simplices in the upper facets of the cyclic polytope correspond to projective-injectives.
\end{example}

\begin{figure}
\caption{The Auslander--Reiten quiver of $\clus_{A_{2}}$ and $\nonconsec{5}{1}$.}\label{fig:clus_ar_quiv}
\[
\begin{tikzpicture}

\begin{scope}[scale=0.75,font=\tiny]

\begin{scope}[shift={(-3,0)}]


\node(1) at (0,0){$\tcs{1}$};
\node(21) at (1,1){$\tcs{2\\1}$};
\node(2) at (2,0){$\tcs{2}$};
\node(32) at (3,1){$\tcs{1}[1]$};
\node(3) at (4,0){$\tcs{2\\1}[1]$};


\draw[->] (1) -- (21);
\draw[->] (21) -- (2);
\draw[->] (2) -- (32);
\draw[->] (32) -- (3);

\end{scope}


\begin{scope}[shift={(3,0)}]


\node(1) at (0,0){$M_{13}$};
\node(21) at (1,1){$M_{14}$};
\node(2) at (2,0){$M_{24}$};
\node(32) at (3,1){$M_{25}$};
\node(3) at (4,0){$M_{35}$};


\draw[->] (1) -- (21);
\draw[->] (21) -- (2);
\draw[->] (2) -- (32);
\draw[->] (32) -- (3);

\end{scope}

\end{scope}

\end{tikzpicture}
\]
\end{figure}

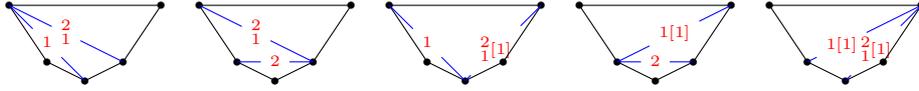
\begin{figure}
\caption{Cluster-tilting objects in $\clus_{A_{2}}$ and their corresponding triangulations.}\label{fig:2d_triangs}
\[
\begin{tikzpicture}


\begin{scope}[shift={(-5,0)},scale=0.5,font=\tiny]


\coordinate(1) at (-2,2);
\coordinate(2) at (-1,0.5);
\coordinate(3) at (0,0);
\coordinate(4) at (1,0.5);
\coordinate(5) at (2,2);


\node(v1) at (1) {$\bullet$};
\node(v2) at (2) {$\bullet$};
\node(v3) at (3) {$\bullet$};
\node(v4) at (4) {$\bullet$};
\node(v5) at (5) {$\bullet$};


\node(mod2) at (0.5*-2 + 0.5*0, 0.5*2 + 0.5*0) {\color{red}$\tcs{1}$};
\node(mod12) at (0.5*-2 + 0.5*1, 0.5*2 + 0.5*0.5) {\color{red}$\tcs{2\\1}$};


\draw (1) -- (2) -- (3) -- (4) -- (5) -- (1);
\draw[blue] (1) -- (mod2) -- (3);
\draw[blue] (1) -- (mod12) -- (4);

\end{scope}



\begin{scope}[shift={(-2.5,0)},scale=0.5,font=\tiny]


\coordinate(1) at (-2,2);
\coordinate(2) at (-1,0.5);
\coordinate(3) at (0,0);
\coordinate(4) at (1,0.5);
\coordinate(5) at (2,2);


\node(v1) at (1) {$\bullet$};
\node(v2) at (2) {$\bullet$};
\node(v3) at (3) {$\bullet$};
\node(v4) at (4) {$\bullet$};
\node(v5) at (5) {$\bullet$};


\node(mod1) at (0.5*-1 + 0.5*1, 0.5*0.5 + 0.5*0.5) {\color{red}$\tcs{2}$};
\node(mod12) at (0.5*-2 + 0.5*1, 0.5*2 + 0.5*0.5) {\color{red}$\tcs{2\\1}$};


\draw (1) -- (2) -- (3) -- (4) -- (5) -- (1);
\draw[blue] (2) -- (mod1) -- (4);
\draw[blue] (1) -- (mod12) -- (4);

\end{scope}


\begin{scope}[shift={(0,0)},scale=0.5,font=\tiny]


\coordinate(1) at (-2,2);
\coordinate(2) at (-1,0.5);
\coordinate(3) at (0,0);
\coordinate(4) at (1,0.5);
\coordinate(5) at (2,2);


\node(v1) at (1) {$\bullet$};
\node(v2) at (2) {$\bullet$};
\node(v3) at (3) {$\bullet$};
\node(v4) at (4) {$\bullet$};
\node(v5) at (5) {$\bullet$};


\node(mod2) at (0.5*-2 + 0.5*0, 0.5*2 + 0.5*0) {\color{red}$\tcs{1}$};
\node(mod121) at (0.4*2 + 0.6*0, 0.4*2 + 0.6*0) {\color{red}$\tcs{2\\1}[1]$};


\draw (1) -- (2) -- (3) -- (4) -- (5) -- (1);
\draw[blue] (1) -- (mod2) -- (3);
\draw[blue] (3) -- (mod121) -- (5);

\end{scope}


\begin{scope}[shift={(2.5,0)},scale=0.5,font=\tiny]


\coordinate(1) at (-2,2);
\coordinate(2) at (-1,0.5);
\coordinate(3) at (0,0);
\coordinate(4) at (1,0.5);
\coordinate(5) at (2,2);


\node(v1) at (1) {$\bullet$};
\node(v2) at (2) {$\bullet$};
\node(v3) at (3) {$\bullet$};
\node(v4) at (4) {$\bullet$};
\node(v5) at (5) {$\bullet$};


\node(mod1) at (0.5*-1 + 0.5*1, 0.5*0.5 + 0.5*0.5) {\color{red}$\tcs{2}$};
\node(mod21) at (0.5*-1 + 0.5*2, 0.5*0.5 + 0.5*2) {\color{red}$\tcs{1}[1]$};


\draw (1) -- (2) -- (3) -- (4) -- (5) -- (1);
\draw[blue] (2) -- (mod1) -- (4);
\draw[blue] (2) -- (mod21) -- (5);

\end{scope}


\begin{scope}[shift={(5,0)},scale=0.5,font=\tiny]


\coordinate(1) at (-2,2);
\coordinate(2) at (-1,0.5);
\coordinate(3) at (0,0);
\coordinate(4) at (1,0.5);
\coordinate(5) at (2,2);


\node(v1) at (1) {$\bullet$};
\node(v2) at (2) {$\bullet$};
\node(v3) at (3) {$\bullet$};
\node(v4) at (4) {$\bullet$};
\node(v5) at (5) {$\bullet$};


\node(mod21) at (0.7*-1 + 0.3*2, 0.7*0.5 + 0.3*2) {\color{red}$\tcs{1}[1]$};
\node(mod121) at (0.4*2 + 0.6*0, 0.4*2 + 0.6*0) {\color{red}$\tcs{2\\1}[1]$};


\draw (1) -- (2) -- (3) -- (4) -- (5) -- (1);
\draw[blue] (2) -- (mod21) -- (5);
\draw[blue] (3) -- (mod121) -- (5);

\end{scope}

\end{tikzpicture}
\]
\end{figure}

\begin{figure}
\caption{The Auslander--Reiten quiver of $\clus_{A_{2}^{2}}$ and $\nonconsec{7}{2}$.}\label{fig:2clus_cat}
\[
\begin{tikzpicture}


\begin{scope}[scale=0.75,font=\tiny]


\begin{scope}[shift={(-4,0)}]

\node(135) at (-3.5,0) {\tcs{1}};
\node(136) at (-2.5,1) {\tcs{2\\1}};
\node(146) at (-1,1) {\tcs{3\\2}};
\node(246) at (0,0) {\tcs{3}};
\node(247) at (1,1) {\tcs{1}[2]};
\node(257) at (2.5,1) {\tcs{2\\1}[2]};
\node(357) at (3.5,0) {\tcs{3\\2}[2]};

\draw[->] (135) -- (136);
\draw[->] (136) -- (146);
\draw[->] (146) -- (246);
\draw[->] (246) -- (247);
\draw[->] (247) -- (257);
\draw[->] (257) -- (357);

\end{scope}



\begin{scope}[shift={(4,0)}]

\node(135) at (-3.5,0) {$M_{135}$};
\node(136) at (-2.5,1) {$M_{136}$};
\node(146) at (-1,1) {$M_{146}$};
\node(246) at (0,0) {$M_{246}$};
\node(247) at (1,1) {$M_{247}$};
\node(257) at (2.5,1) {$M_{257}$};
\node(357) at (3.5,0) {$M_{357}$};

\draw[->] (135) -- (136);
\draw[->] (136) -- (146);
\draw[->] (146) -- (246);
\draw[->] (246) -- (247);
\draw[->] (247) -- (257);
\draw[->] (257) -- (357);

\end{scope}


\end{scope}

\end{tikzpicture}
\]
\end{figure}

\begin{table}
\caption{Cluster-tilting objects in $\clus_{A_{2}^{2}}$ and their corresponding triangulations.}\label{tab:clus_4d_triangs}
\begin{tabularx}{\textwidth}{
  >{\centering\arraybackslash}X 
  | >{\centering\arraybackslash}X
  | >{\centering\arraybackslash}X}
Cluster-tilting object & Triangulation & Internal $d$-simplices\\
\hline
$\tcs{1} \oplus \tcs{2\\1} \oplus \tcs{3\\2}$ & $\{|12345|, |12356|, |13456|,$ $|12367|, |13467|, |14567|\}$ & $\{135, 136, 146\}$\\
$\tcs{3} \oplus \tcs{2\\1} \oplus \tcs{3\\2}$ & $\{|12346|, |12456|, |23456|,$ $|12367|, |13467|, |14567|\}$ & $\{246, 136, 146\}$\\
$\tcs{3} \oplus \tcs{1}[2] \oplus \tcs{3\\2}$ & $\{|12347|, |12456|, |23456|,$ $|12467|, |23467|, |14567|\}$ & $\{246, 247, 146\}$\\
$\tcs{3} \oplus \tcs{1}[2] \oplus \tcs{2\\1}[2]$ & $\{|12347|, |12457|, |23456|,$ $|12567|, |23467|, |24567|\}$ & $\{246, 247, 257\}$\\
$\tcs{1} \oplus \tcs{2\\1} \oplus \tcs{3\\2}[2]$ & $\{|12345|, |12356|, |13457|,$ $|12367|, |13567|, |34567|\}$ & $\{135, 136, 357\}$\\
$\tcs{1} \oplus \tcs{2\\1}[2] \oplus \tcs{3\\2}[2]$ & $\{|12345|, |12357|, |13457|,$ $|12567|, |23567|, |34567|\}$ & $\{135, 257, 357\}$\\
$\tcs{1}[2] \oplus \tcs{2\\1}[2] \oplus \tcs{3\\2}[2]$ & $\{|12347|, |12457|, |23457|,$ $|12567|, |23567|, |34567|\}$ & $\{357, 247, 257\}$
\end{tabularx}
\end{table}

\section{The higher Stasheff--Tamari orders in even dimensions}\label{sect:even}

We now show how the higher Stasheff--Tamari orders may be interpreted algebraically in even dimensions. In this section we choose to work in the tilting framework.

\subsection{First order}

Our first result shows that increasing bistellar flips of triangulations correspond to left mutations of tilting modules.

\begin{theorem}\label{thm:even:first}
Let $\mathcal{T}, \mathcal{T}' \in \mathcal{S}(n+2d,2d)$ with corresponding tilting $A_{n}^{d}$-modules $T$ and $T'$. Then $\mathcal{T} \lessdot_{1} \mathcal{T}'$ if and only if $T'$ is a left mutation of $T$.
\end{theorem}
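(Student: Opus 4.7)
The plan is to use the Oppermann--Thomas bijection between triangulations and tilting modules to directly translate the combinatorial data of a bistellar flip into the algebraic data of a left mutation, and vice versa.

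Suppose first that $\mathcal{T} \lessdot_1 \mathcal{T}'$ is witnessed by an increasing $(2d+2)$-tuple $H = (h_0, \ldots, h_{2d+1})$. By Gale's evenness criterion, $C(H, 2d)$ has exactly two internal $d$-simplices: $L = (h_0, h_2, \ldots, h_{2d})$, which lies in the lower triangulation, and $U = (h_1, h_3, \ldots, h_{2d+1})$, which lies in the upper. All other $d$-simplices of $C(H, 2d)$ lie on its boundary and hence belong to both local triangulations. Since $\mathcal{T}$ and $\mathcal{T}'$ agree outside $C(H, 2d)$, this yields $e(\mathcal{T}) = (e(\mathcal{T}') \setminus \{U\}) \cup \{L\}$, so by the Oppermann--Thomas bijection $T = E \oplus M_L$ and $T' = E \oplus M_U$ with common summand $E$.

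It remains to produce an exact sequence $0 \to M_L \to E_1 \to \cdots \to E_d \to M_U \to 0$ with each $E_i \in \additive E$. Since $L \wr U$, the combinatorial dictionary gives $\Ext^d_{A_n^d}(M_U, M_L) \neq 0$, so a non-split $d$-extension exists. I would identify its intermediate terms as
\[
E_i := \bigoplus_{\substack{S \subseteq \{0, 1, \ldots, d\} \\ |S| = i}} M_{A(S)},
\]
where $A(S)$ is the $d$-simplex obtained from $L$ by replacing the entry $h_{2j}$ with $h_{2j+1}$ for each $j \in S$ (and where $M_{A(S)}$ is interpreted as zero whenever $A(S) \notin \modset{n+2d}{d}$, i.e.\ when the swap collapses two consecutive vertices). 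Each non-degenerate $A(S)$ is a boundary $d$-simplex of $C(H, 2d)$, hence a summand of $E$. In the case $d = 1$ this recovers the familiar ``quadrilateral'' exchange sequence $0 \to M_{(h_0, h_2)} \to M_{(h_0, h_3)} \oplus M_{(h_1, h_2)} \to M_{(h_1, h_3)} \to 0$ visible in Figure~\ref{fig:mod_2d_triangs}. The maps would be induced by the arrows of the quiver $Q^{(d,n)}$, with exactness verified either by direct computation using the commutativity relations defining $A_n^d$ and the $d$-cluster-tilting property of $\additive M^{(d,n)}$, or alternatively by invoking the existence of mutation exchange sequences in $d$-cluster-tilting subcategories and then identifying the cokernel with $M_U$ via the combinatorial dictionary.

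For the converse, suppose $T' = E \oplus M_U$ is a left mutation of $T = E \oplus M_L$. The non-split exchange sequence forces $\Ext^d_{A_n^d}(M_U, M_L) \neq 0$, hence $L \wr U$. The sorted union $H := L \cup U$ is then an increasing $(2d+2)$-tuple with $L$ and $U$ as its lower and upper diagonals. Since $e(\mathcal{T})$ and $e(\mathcal{T}')$ are non-intertwining collections of the required size differing in precisely the single element $L$ versus $U$, the Oppermann--Thomas bijection together with the connectedness theory of triangulations of cyclic polytopes forces $\mathcal{T}$ and $\mathcal{T}'$ to be related by a bistellar flip across $C(H, 2d)$, with $\mathcal{T}$ restricting to the lower and $\mathcal{T}'$ to the upper triangulation. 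The main obstacle is the exactness verification in the forward direction: while the combinatorial shape of the proposed complex is dictated naturally by $C(H, 2d)$, establishing exactness requires careful bookkeeping of compositions in $A_n^d$-$\modules$ via $Q^{(d,n)}$, and matching the complex with the canonical exchange data of the mutation. A secondary subtlety in the converse is justifying that the single-summand change of the tilting module forces the geometric restriction to precisely $C(H, 2d)$, which should follow from the minimality of mutation but needs explicit argument.
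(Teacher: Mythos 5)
Your argument turns on exactly the same dictionary as the paper's proof: the two internal $d$-simplices of $C(H,2d)$ are $L=(h_0,h_2,\dots,h_{2d})$ and $U=(h_1,h_3,\dots,h_{2d+1})$, these are respectively the common face of the lower facets and of the upper facets of $|H|$, one has $L\wr U$, and $\Ext^{d}_{A_n^d}(M_U,M_L)\neq 0$ precisely because $L\wr U$ --- this is the content that makes ``increasing'' match ``left''. The structural difference is that the paper first quotes \cite[Theorem~4.4]{ot} for the unoriented statement that $T$ and $T'$ are related by a mutation if and only if $\mathcal{T}$ and $\mathcal{T}'$ are related by a bistellar flip, and then only checks that the orientations agree, whereas you attempt to rebuild that correspondence by hand. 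The two steps you yourself flag as incomplete --- exactness of the explicit complex with $E_i=\bigoplus_{|S|=i}M_{A(S)}$, and the claim in the converse that a tilting module differing from $T$ in a single summand forces a bistellar flip supported exactly on $C(H,2d)$ --- are precisely what that citation (equivalently, the mutation theory of almost complete tilting modules inside a $d$-cluster-tilting subcategory) supplies, so the clean repair is to invoke it rather than to chase relations in $Q^{(d,n)}$. Your proposed middle terms do reproduce the exchange sequences in the paper's examples (e.g.\ $0\to M_{135}\to M_{136}\to M_{146}\to M_{246}\to 0$ for $H=(1,\dots,6)$), so the explicit complex is a nice bonus, but as written the forward direction is not complete without either the exactness verification or the citation. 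One further small point in the converse: an exact sequence $0\to M_L\to E_1\to\cdots\to E_d\to M_U\to 0$ only gives a \emph{class} in $\Ext^{d}(M_U,M_L)$, so you should justify that this class is nonzero before concluding $L\wr U$; for instance, $E\oplus M_L\oplus M_U$ cannot be $\Ext^{d}$-self-orthogonal, exactly one of $\Ext^{d}(M_U,M_L)$ and $\Ext^{d}(M_L,M_U)$ is nonzero by the intertwining dichotomy, and the direction of the exchange sequence selects which.
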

\begin{proof}
We know from \cite[Theorem 4.4]{ot} that $T$ and $T'$ are related by a mutation if and only if $\mathcal{T}$ and $\mathcal{T}'$ are related by a bistellar flip. Hence, what we need to show is that the mutation is a left mutation if and only if the bistellar flip is increasing. Let $T = E \oplus M_{A}$ and $T' = E \oplus M_{B}$. This means that the $(2d + 1)$-simplex inducing the increasing bistellar flip is $|A \cup B|$, and that $e(\mathcal{T}') = (e(\mathcal{T})\setminus \{A\}) \cup \{B\}$. Then $T'$ is a left mutation of $T$ if and only if $\Ext_{A_{n}^{d}}^{d}(M_{B}, M_{A}) \neq 0$, and we know this is the case if and only if $A \wr B$. Finally, we have that $A \wr B$ if and only if the bistellar flip is increasing, since one can show that, in this case, $|A|$ is the intersection of the lower facets of $|A \cup B|$ and $|B|$ is the intersection of its upper facets.
\end{proof}

\begin{example}\label{ex:even:first}
We illustrate this theorem with some examples, following on from Example~\ref{ex:back_tilt}. We start with the example where $n = 3$ and $d = 1$. If we take the triangulations given by $e(\mathcal{T}) = \{13, 14, 15\}$ and $e(\mathcal{T}') = \{24, 14, 15\}$, then $\mathcal{T}'$ is an increasing bistellar flip of $\mathcal{T}$ via the 3-simplex $|1234|$. The intersection of the lower facets of this simplex is $|13|$ and the intersection of its upper facets is $|24|$. The corresponding tilting modules are \[T = \tcs{1} \oplus \tcs{2\\1} \oplus \tcs{3\\2\\1} \quad \text{ and } \quad T' = \tcs{2} \oplus \tcs{2\\1} \oplus \tcs{3\\2\\1},\] which are related by the exchange sequence \[0 \to \tcs{1} \to \tcs{2\\1} \to \tcs{2} \to 0,\] so that $T'$ is a left mutation of $T$, as given by Theorem~\ref{thm:even:first}.

Now consider the example where $n = 3$ and $d = 2$. Take the triangulations given by $e(\mathcal{T}) = \{135, 136, 146, 137, 147, 157\}$ and $e(\mathcal{T}') = \{246, 136, 146, 137, 147, 157\}$. Here $\mathcal{T}'$ is an increasing bistellar flip of $\mathcal{T}$ via the 5-simplex $|123456|$. The intersection of the lower facets of this simplex is $|135|$ and the intersection of its upper facets is $|246|$. The corresponding tilting modules are then \[T = \tcs{1} \oplus \tcs{2\\1} \oplus \tcs{4\\2} \oplus \tcs{3\\2\\1} \oplus \tcs{5\\4~3\\2} \oplus \tcs{6\\5\\3} \quad \text{ and } \quad T' = \tcs{4} \oplus \tcs{2\\1} \oplus \tcs{4\\2} \oplus \tcs{3\\2\\1} \oplus \tcs{5\\4~3\\2} \oplus \tcs{6\\5\\3},\] which are related by the exchange sequence \[0 \to \tcs{1} \to \tcs{2\\1} \to \tcs{4\\2} \to \tcs{4} \to 0,\] so that $T'$ is again a left mutation of $T$, as Theorem~\ref{thm:even:first} dictates.
\end{example}

\subsection{Second order}\label{even-second}

We now show how the second higher Stasheff--Tamari order may be interpreted algebraically. We use the definition of the second order in terms of submersion, which we show has the following algebraic interpretation.

\begin{proposition}\label{prop:sub<->perp}
Let $\mathcal{T} \in \mathcal{S}(n+2d,2d)$ be a triangulation with corresponding tilting $A_{n}^{d}$-module $T$. Let $|A|$ be an internal $d$-simplex in $C(n+2d,2d)$ with corresponding indecomposable $A_{n}^{d}$-module $M_{A}$. Then $|A|$ is submerged by $\mathcal{T}$ if and only if $M_{A} \in \prescript{\bot}{}{T}$.
\end{proposition}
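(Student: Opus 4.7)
The plan is to show that both conditions are equivalent to the single combinatorial condition that no $B \in e(\mathcal{T})$ satisfies $B \wr A$. With this intermediate equivalence in hand, the proposition follows by transitivity.

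For the algebraic side, since $\gldim A_{n}^{d} \leqslant d$ and $\additive M^{(d,n)}$ is a $d$-cluster-tilting subcategory, the definition of $\prescript{\bot}{}{T}$ collapses to the single condition $\Ext_{A_{n}^{d}}^{d}(M_{A}, T) = 0$. Decomposing $T = \bigoplus_{B \in e(\mathcal{T})} M_{B}$ and applying the criterion $\Ext^{d}(M_{Y}, M_{X}) \neq 0 \iff X \wr Y$ stated earlier, one sees that $M_{A} \in \prescript{\bot}{}{T}$ if and only if no $B \in e(\mathcal{T})$ satisfies $B \wr A$.

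For the geometric side, the underlying fact is a moment-curve calculation: whenever $B \wr A$, the $d$-simplices $|A|$ and $|B|$ meet in the interior of $C(n+2d, 2d)$, and at any such intersection point the lift satisfies $s_{|A|}(x) > s_{|B|}(x)$. This generalises the elementary observation that two crossing diagonals of a convex polygon, when lifted along the moment curve $(t, t^{2}, t^{3})$, are compared by evaluating cubic interpolants, with the ``outer'' diagonal lying strictly above the ``inner'' one at their crossing. Granted this, the forward direction is direct: if some $B \in e(\mathcal{T})$ satisfies $B \wr A$, then $|B|$ is a face of $\mathcal{T}$, so $s_{\mathcal{T}} = s_{|B|}$ on $|B|$, and at the intersection point one obtains $s_{|A|} > s_{|B|} = s_{\mathcal{T}}$, contradicting submersion.

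For the converse, suppose $|A|$ is not submerged and pick $x \in |A|$ with $s_{|A|}(x) > s_{\mathcal{T}}(x)$; let $|S|$ be the $2d$-simplex of $\mathcal{T}$ containing $x$. The lifted $|A|$ pierces the lifted upper boundary of $|S|$ from above at $x$, and applying the moment-curve lemma inside the subpolytope $|S|$ yields a $d$-face $|B|$ of $|S|$ satisfying $B \wr A$; because the pierce is from above, $|B|$ is an upper $d$-face of $|S|$, and in particular does not lie in a lower facet of the ambient cyclic polytope, so $B \in e(\mathcal{T})$. The main obstacle is precisely this last step: verifying both that such a $|B|$ exists (the moment-curve lemma must be upgraded from the case of two crossing $d$-simplices to a statement about a $d$-simplex piercing a $2d$-simplex) and that the extracted face is genuinely an upper face of $|S|$ rather than a $d$-simplex of a lower facet of $C(n+2d, 2d)$, which requires a careful application of Gale's evenness criterion to the combinatorial position of $B$ inside $S$.
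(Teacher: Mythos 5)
Your proposal follows the same route as the paper's sketch: both sides are reduced to the single combinatorial condition that no $B \in e(\mathcal{T})$ satisfies $B \wr A$, using the $\Ext^{d}$--intertwining dictionary on the algebraic side and, on the geometric side, the fact that the lift of $|A|$ lies above that of $|B|$ whenever $B \wr A$ (which the paper phrases via $|A|$ and $|B|$ being the intersections of the upper and lower facets of $|A \cup B|_{2d+1}$). The converse step you flag as the main obstacle --- extracting from a non-submerged $|A|$ a simplex $B \in e(\mathcal{T})$ with $B \wr A$ --- is precisely the point the paper's sketch also asserts without proof, so your argument matches it in both strategy and level of completeness.
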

\begin{proof}[Sketch]
We have that $M_{A} \in \prescript{\bot}{}{T}$ if and only if for all indecomposable summands $M_{B}$ of $T$ we have $\Ext_{A_{n}^{d}}(M_{A}, M_{B}) = 0$, which is the case if and only if we do not have $B \wr A$. Hence we need to show that $|A|$ is submerged by $\mathcal{T}$ if and only if there is no $B \in e(\mathcal{T})$ such that $B \wr A$. If there is a $B \in e(\mathcal{T})$ such that $B \wr A$, then $|A|_{2d + 1}$ is the intersection of the upper facets of $|A \cup B|_{2d + 1}$ and $|B|_{2d + 1}$ is the intersection of its lower facets. Hence $|A|$ cannot be submerged by $\mathcal{T}$, since $|A|_{2d + 1}$ lies above $|B|_{2d + 1}$, and $B \in e(\mathcal{T})$. Conversely, if $|A|$ is not submerged by $\mathcal{T}$, then $|A|_{2d+1}$ either lies below $s_{\mathcal{T}}(C(n + 2d, 2d))$ or intersects $s_{\mathcal{T}}(C(n + 2d, 2d))$. In either case one can find a $B \in e(\mathcal{T})$ such that $B \wr A$.
\end{proof}

By applying Proposition~\ref{prop:sub<->perp} and using the interpretation of the second higher Stasheff--Tamari order in terms of submersion sets we obtain the following theorem.

\begin{theorem}\label{thm:even:second}
Let $\mathcal{T}, \mathcal{T}' \in \mathcal{S}(n+2d,2d)$ with corresponding tilting $A_{n}^{d}$-modules $T$ and $T'$. Then $\mathcal{T} \leqslant_{2} \mathcal{T}'$ if and only if $\prescript{\bot}{}T \subseteq \prescript{\bot}{}T'$.
\end{theorem}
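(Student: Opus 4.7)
The plan is to derive this theorem almost mechanically from Proposition~\ref{prop:sub<->perp} combined with the submersion-set characterisation of the second higher Stasheff--Tamari order recorded in the preliminaries: $\mathcal{T} \leqslant_{2} \mathcal{T}'$ if and only if $\mathrm{sub}_{d}(\mathcal{T}) \subseteq \mathrm{sub}_{d}(\mathcal{T}')$. So the task is to show that, modulo bookkeeping about boundary simplices and projective-injectives, the submersion set of $\mathcal{T}$ and the subcategory $\prescript{\bot}{}T$ carry exactly the same information.

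First I would dispose of the boundary $d$-simplices of $C(n+2d,2d)$. Any $d$-simplex $|A|$ lying in a facet is a face of that facet and is therefore forced to belong to every triangulation of $C(n+2d,2d)$; consequently $s_{|A|}$ and the restriction of $s_{\mathcal{T}}$ to $|A|$ coincide, and $|A|$ is automatically submerged by every triangulation. Boundary $d$-simplices thus contribute equally to $\mathrm{sub}_{d}(\mathcal{T})$ and $\mathrm{sub}_{d}(\mathcal{T}')$ and may be ignored. Dually, under the Oppermann--Thomas bijection of Section~\ref{sect-back-tilt}, the $d$-simplices in upper facets correspond to the projective-injective summands of $M^{(d,n)}$, while the $d$-simplices in lower facets have no counterpart in $\additive M^{(d,n)}$ at all. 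Since any projective summand $P \in \additive M^{(d,n)}$ satisfies $\Ext_{A_{n}^{d}}^{i}(P, T) = 0$ for every $i > 0$ and every tilting module $T$, the projective-injective summands lie in both $\prescript{\bot}{}T$ and $\prescript{\bot}{}T'$ and may likewise be ignored from the algebraic comparison.

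With these reductions the residual comparison is precisely between internal $d$-simplices on the geometric side and non-projective-injective summands of $M^{(d,n)}$ on the algebraic side, matched by the Oppermann--Thomas bijection. Here Proposition~\ref{prop:sub<->perp} applies verbatim: for an internal $d$-simplex $|A|$, we have $|A| \in \mathrm{sub}_{d}(\mathcal{T})$ if and only if $M_A \in \prescript{\bot}{}T$. Assembling these ingredients yields $\mathrm{sub}_{d}(\mathcal{T}) \subseteq \mathrm{sub}_{d}(\mathcal{T}')$ if and only if $\prescript{\bot}{}T \subseteq \prescript{\bot}{}T'$. The substantive content sits in Proposition~\ref{prop:sub<->perp}; I do not anticipate any genuine obstacle here beyond the boundary accounting outlined above.
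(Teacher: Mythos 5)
Your proposal is correct and follows essentially the same route as the paper: Theorem~\ref{thm:even:second} is deduced directly from Proposition~\ref{prop:sub<->perp} together with the characterisation $\mathcal{T} \leqslant_{2} \mathcal{T}'$ iff $\mathrm{sub}_{d}(\mathcal{T}) \subseteq \mathrm{sub}_{d}(\mathcal{T}')$. The only difference is that you spell out the bookkeeping for boundary $d$-simplices and projective-injective summands, which the paper leaves implicit; your accounting of these is accurate.
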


\begin{remark}
Theorems~\ref{thm:even:first} and \ref{thm:even:second} open the door for a proof of the Edelman--Reiner conjecture using the techniques of homological algebra. The orders in these theorems are higher-dimensional versions of the orders on tilting modules introduced in \cite{rs-simp} for general finite-dimensional algebras over algebraically closed fields. They were shown to have the same Hasse diagram in \cite{hu-potm} for arbitrary Artin algebras, which implies that the orders on tilting modules for $d = 1$ are equal for a representation-finite algebra. A higher-dimensional version of this result would give an algebraic proof of the equivalence of the higher Stasheff--Tamari orders in even dimensions. Indeed, since \cite{hu-potm} shows that the two orders on tilting modules have the same Hasse diagram for $d = 1$, this provides a neat algebraic proof of the result that the two higher Stasheff--Tamari orders are equal in dimension two \cite[Theorem 3.8]{er}.
\end{remark}

\begin{example}\label{ex-even-second}
We illustrate Theorem~\ref{thm:even:second} with examples, following on from Example~\ref{ex:back_tilt}. We first consider the case where $n = 3$ and $d = 1$. Consider the triangulations given by $\mathcal{T} = \{13, 14, 15\}$ and $\mathcal{T}' = \{24, 25, 15\}$. The corresponding tilting modules are \[T = \tcs{1} \oplus \tcs{2\\1} \oplus \tcs{3\\2\\1} \quad \text{ and } \quad T' = \tcs{2} \oplus \tcs{3\\2} \oplus \tcs{3\\2\\1},\] whose orthogonal categories are
\begin{align*}
\orth{T} &= \additive \left\{\tcs{1},\, \tcs{2\\1},\, \tcs{3\\2\\1}\right\}\\
\orth{T'} &= \additive \left\{\tcs{1},\, \tcs{2\\1},\, \tcs{3\\2\\1},\, \tcs{2},\, \tcs{3\\2}\right\}.
\end{align*}
Since $\orth{T} \subseteq \orth{T'}$, we have that $\mathcal{T} \leqslant_{2} \mathcal{T}'$ by Theorem~\ref{thm:even:second}.

Now consider the example where $n = 3$ and $d = 2$. Take the triangulations given by $\mathcal{T} = \{135, 136, 146, 137, 147, 157\}$ and $\mathcal{T}' = \{135, 257, 357, 137, 147, 157\}$. The corresponding tilting modules are then \[T = \tcs{1} \oplus \tcs{2\\1} \oplus \tcs{4\\2} \oplus \tcs{3\\2\\1} \oplus \tcs{5\\4~3\\2} \oplus \tcs{6\\5\\3} \quad \text{ and } \quad T' = \tcs{1} \oplus \tcs{6\\5} \oplus \tcs{6} \oplus \tcs{3\\2\\1} \oplus \tcs{5\\4~3\\2} \oplus \tcs{6\\5\\3},\] whose orthogonal categories are
\begin{align*}
\orth{T} &= \additive \left\{\tcs{1},\, \tcs{2\\1},\, \tcs{4\\2},\, \tcs{3\\2\\1},\, \tcs{5\\4~3\\2},\, \tcs{6\\5\\3}\right\},\\
\orth{T'} &= \additive \left\{\tcs{1},\, \tcs{2\\1},\, \tcs{4\\2},\, \tcs{3\\2\\1},\, \tcs{5\\4~3\\2},\, \tcs{6\\5\\3},\, \tcs{6\\5},\, \tcs{6}\right\}.
\end{align*}
Since $\orth{T} \subseteq \orth{T'}$, we have that $\mathcal{T} \leqslant_{2} \mathcal{T}'$ by Theorem~\ref{thm:even:second}.
\end{example}

Naturally, analogues of Theorem~\ref{thm:even:first} and Theorem~\ref{thm:even:second} also hold in the cluster-tilting framework.

\section{The higher Stasheff--Tamari orders in odd dimensions}\label{sect:odd}

We now give combinatorial and algebraic interpretations of the higher Stasheff--Tamari orders on triangulations of odd-dimensional cyclic polytopes. To obtain the algebraic interpretations, we first show how triangulations of odd-dimensional cyclic polytopes arise in the representation theory of $A_{n}^{d}$. This gives the other half of the picture from \cite{ot}, where it is shown how triangulations of even-dimensional cyclic polytopes arise in the representation theory of $A_{n}^{d}$.

We first show that triangulations of odd-dimensional cyclic polytopes correspond to equivalences classes of $d$-maximal green sequences, which we define as the analogues of maximal green sequences in higher Auslander--Reiten theory. In this section we are now using the cluster-tilting framework; in the tilting framework triangulations of odd-dimensional cyclic polytopes correspond to maximal sequences of left mutations of tilting modules.

If $\Lambda$ is a $d$-representation-finite $d$-hereditary algebra over a field $K$, then we define a \emph{$d$-maximal green sequence} for $\Lambda$ to be a sequence $(T_{1}, \dots, T_{r})$ of cluster-tilting objects in $\clus_{\Lambda}$ such that  $T_{1}=\Lambda$, $T_{r}=\Lambda[d]$, and, for $i \in [r-1]$, $T_{i+1}$ is a left mutation of $T_{i}$. Let $\mathcal{MG}_{d}(\Lambda)$ denote the set of $d$-maximal green sequences of $\Lambda$. Given a $d$-maximal green sequence $G$, we denote the set of isomorphism classes of indecomposable summands of cluster-tilting objects occurring in $G$ by $\Sigma(G)$. We define an equivalence relation on the set of $d$-maximal green sequences by $G \sim G'$ if and only if $\Sigma(G)=\Sigma(G')$. As before, we use $\widetilde{\mathcal{MG}}_{d}(\Lambda)$ to denote the set of equivalence classes of $\mathcal{MG}_{d}(\Lambda)$ under the relation $\sim$.

\begin{theorem}\label{thm:max_green}
Triangulations of the cyclic polytope $C(n+2d+1,2d+1)$ are in bijection with $\widetilde{\mathcal{MG}}_{d}(A_{n}^{d})$.
\end{theorem}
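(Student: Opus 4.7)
The plan is to factor the desired bijection through maximal chains in the first higher Stasheff--Tamari order $\mathcal{S}_{1}(n+2d+1, 2d)$, using the vertical projection $\pi \colon C(n+2d+1, 2d+1) \to C(n+2d+1, 2d)$. A triangulation of the odd-dimensional polytope is a piecewise-linear section of $\pi$, and sweeping through it from below to above will produce a maximal chain of increasing bistellar flips between triangulations of the even-dimensional polytope. The cluster-tilting analogue of Theorem~\ref{thm:even:first} will then translate such chains into $d$-maximal green sequences.

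First I would fix the endpoint correspondence. Under the Oppermann--Thomas bijection between cluster-tilting objects in $\clus_{A_{n}^{d}}$ and triangulations of $C(n+2d+1, 2d)$, the cluster-tilting object $A_{n}^{d}$ corresponds to the lower triangulation and $A_{n}^{d}[d]$ corresponds to the upper triangulation. Combining this with the cluster-tilting analogue of Theorem~\ref{thm:even:first}, a $d$-maximal green sequence $(T_{1}, \dots, T_{r})$ is the same datum as a maximal chain in $\mathcal{S}_{1}(n+2d+1, 2d)$ from the lower to the upper triangulation, whose covering steps are increasing bistellar flips indexed by $(2d+1)$-simplices $|H|$ for various $(2d+2)$-tuples $H \in [n+2d+1]^{2d+2}$.

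Next I would construct the forward map explicitly. Given $\mathcal{T} \in \mathcal{S}(n+2d+1, 2d+1)$, its lower facets project to the lower triangulation of $C(n+2d+1, 2d)$ and its upper facets to the upper triangulation. Its $(2d+1)$-simplices carry a natural partial order by vertical position, and any linear extension realises $\mathcal{T}$ as a sequence of increasing bistellar flips on horizontal $2d$-dimensional slices. This produces a maximal chain in $\mathcal{S}_{1}(n+2d+1, 2d)$, hence a $d$-maximal green sequence $G_{\mathcal{T}}$. Different linear extensions interchange commuting flips and yield sequences with the same set $\Sigma(G_{\mathcal{T}})$ of cluster-tilting summands, so the equivalence class $[G_{\mathcal{T}}] \in \widetilde{\mathcal{MG}}_{d}(A_{n}^{d})$ is well defined. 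For the inverse, from $[G]$ one reads off the $(2d+1)$-simplices $|H|$ labelling the flips in the chain and stacks them to recover a triangulation of the odd-dimensional polytope.

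The main obstacle will be matching the equivalence relation $G \sim G' \iff \Sigma(G) = \Sigma(G')$ with the fibres of $\mathcal{T} \mapsto [G_{\mathcal{T}}]$. That different sweeps of the same $\mathcal{T}$ yield $d$-maximal green sequences with the same $\Sigma$ will follow because every indecomposable summand appearing in some $T_{i}$ corresponds to an internal $d$-simplex realised in some horizontal slice of $\mathcal{T}$, which is intrinsic to $\mathcal{T}$. The converse direction---that $\Sigma(G)$ determines $\mathcal{T}$---will be established by reconstructing the $(2d+1)$-simplices of $\mathcal{T}$ from the intermediate cluster-tilting summands, by induction on height using the local structure of bistellar flips inside sub-polytopes $C(H, 2d)$ together with the intertwining criterion of \cite[Section~2]{ot}. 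Surjectivity of $\mathcal{T} \mapsto [G_{\mathcal{T}}]$ will follow from Rambau's theorem \cite{rambau}, which guarantees that every maximal chain from the lower to the upper triangulation in $\mathcal{S}_{1}(m, 2d)$ arises as the slicing sequence of some triangulation of $C(m, 2d+1)$.
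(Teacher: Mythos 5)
Your proposal is correct and follows essentially the same route as the paper: both factor the bijection through maximal chains in $\mathcal{S}_{1}(n+2d+1,2d)$ via Rambau's theorem, use the cluster-tilting analogue of Theorem~\ref{thm:even:first} to identify such chains with $d$-maximal green sequences, and then match the equivalence relation $\Sigma(G)=\Sigma(G')$ with the relation of two chains yielding the same odd-dimensional triangulation. The only cosmetic difference is that you unpack the sweeping/stacking construction underlying Rambau's bijection explicitly, where the paper cites it as a black box.
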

\begin{proof}[Sketch]
This follows from \cite[Theorem 1.1(ii)]{rambau}, which states that triangulations of $C(n + 2d + 1, 2d + 1)$ are in bijection with equivalence classes of maximal chains in $\mathcal{S}_{1}(n + 2d + 1, 2d)$ under the relation of differing by a permutation of bistellar flip operations. By the analogue of Theorem~\ref{thm:even:first} in the cluster-tilting framework, elements of $\mathcal{MG}(A_{n}^{d})$ correspond to maximal chains in $\mathcal{S}_{1}(n + 2d + 1,2d)$. Given two $d$-maximal green sequences $G, G'$ of $A_{n}^{d}$ corresponding to maximal chains $C, C'$ in $\mathcal{S}_{1}(n + 2d + 1, 2d)$, one can then show that $\Sigma(G) = \Sigma(G')$ if and only if $C$ and $C'$ give the same triangulation of $C(n + 2d + 1, 2d + 1)$.
\end{proof}

\begin{remark}
The number of $(2d + 1)$-simplices in a triangulation $\mathcal{T}$ of $C(n + 2d + 1, 2d + 1)$ is equal to the number of increasing bistellar flips in the corresponding chain in $\mathcal{S}_{1}(n + 2d + 1, 2d + 1)$, and hence equal to the length of any $d$-maximal green sequence in the corresponding equivalence class $[G]$. Unlike in even dimension, where the number of $2d$-simplices in a triangulation is fixed, in odd dimensions, the number of $(2d + 1)$-simplices in a triangulation of a given cyclic polytope is not fixed. This reflects the fact that $d$-maximal green sequences are not always of a fixed length for a given algebra.
\end{remark}

\subsection{First order}\label{odd-first}

We now explain our algebraic characterisation of the first higher Stasheff--Tamari order in terms of $d$-maximal green sequences. An \emph{oriented polygon} is a sub-poset of $\mathcal{S}_{1}(m,2d)$ formed of a union of a chain of covering relations of length $d + 2$ with a chain of covering relations of length $d + 1$, such that these chains intersect only at the top and bottom. For an illustration see Figure~\ref{fig:deformation}. Here the \emph{length} of a chain is the number of covering relations in it. We think of an oriented polygon as being oriented from the longer side to the shorter side. If two $d$-maximal green sequences $G,G'$ differ only in that $G$ contains the longer side of an oriented polygon and $G'$ contains the shorter side, then we say that $G'$ is \emph{an increasing elementary polygonal deformation} of $G$. Note that an increasing elementary polygonal deformation decreases the length of the sequence. The reader may like to think that an increasing elementary polygonal increases the \emph{speed} of the sequence.  Our terminology is based on \cite{hi-no-gap,g-mc}, although the notion of polygon we need differs from both of these.

\begin{theorem}\label{thm:odd:first}
Let $\mathcal{T}, \mathcal{T}' \in \mathcal{S}(n+2d+1,2d+1)$ correspond to equivalence classes of $d$-maximal green sequences $[G], [G'] \in \widetilde{\mathcal{MG}}(A_{n}^{d})$. Then $\mathcal{T}\lessdot_{1}\mathcal{T}'$ if and only if there are equivalence class representatives $\widehat{G} \in [G]$ and $\widehat{G}' \in [G']$ such that $\widehat{G}'$ is an increasing elementary polygonal deformation of $\widehat{G}$.
\end{theorem}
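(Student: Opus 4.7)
The plan is to reduce the theorem to a local analysis inside the $(2d+2)$-simplex responsible for the bistellar flip, using the bijection of Theorem~\ref{thm:max_green}. By the cluster-tilting analogue of Theorem~\ref{thm:even:first}, a $d$-maximal green sequence of $A_{n}^{d}$ corresponds to a maximal chain in $\mathcal{S}_{1}(n+2d+1,2d)$, and under the equivalence $\sim$ such chains are identified up to commutations of independent bistellar flips, as in Rambau's theorem. In particular, the $(2d+1)$-simplices appearing as flip sites along any representative chain are exactly the $(2d+1)$-simplices of the associated triangulation of $C(n+2d+1,2d+1)$. It therefore suffices to show: $\mathcal{T}\lessdot_{1}\mathcal{T}'$ if and only if there exist representative maximal chains for $\mathcal{T}$ and $\mathcal{T}'$ that differ by replacing one side of an oriented polygon by the other.

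Next, I would carry out the local analysis around the flipping $(2d+2)$-simplex $|H|$, where $H$ has $2d+3$ vertices. If $\mathcal{T}\lessdot_{1}\mathcal{T}'$ via $|H|$, then $\mathcal{T}$ restricts to the lower triangulation of $C(H,2d+1)$ and $\mathcal{T}'$ to the upper, with the two agreeing outside $|H|$. Applying Gale's Evenness Criterion to the $(2d+2)$-simplex $C(H,2d+2)$ (of which $|H|=C(H,2d+1)$ is the image under the projection) shows that it has $d+2$ lower facets and $d+1$ upper facets; consequently the lower triangulation of $C(H,2d+1)$ has $d+2$ top-dimensional simplices and the upper has $d+1$. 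Projecting once more, the sub-poset of $\mathcal{S}_{1}(n+2d+1,2d)$ consisting of triangulations that agree with a fixed reference triangulation outside $C(H,2d)$ is isomorphic to $\mathcal{S}_{1}(2d+3,2d)$, and I would prove that this poset is precisely an oriented polygon with two maximal chains, of lengths $d+2$ and $d+1$, joining the lower and upper triangulation of $C(H,2d)$.

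For the forward direction, I would then use commutativity of non-interacting bistellar flips, the generating relation of Rambau's equivalence, to choose a representative $\widehat{G}$ of $[G]$ in which the $d+2$ flips whose flip-site $(2d+1)$-simplices lie inside $|H|$ appear consecutively: any such flip commutes with any flip whose $(2d+1)$-simplex lies outside $|H|$, since the two affect disjoint portions of the $(2d)$-dimensional triangulation. Once grouped, these $d+2$ flips trace out the longer side of the oriented polygon identified above. Replacing this sub-chain by the shorter side of length $d+1$ yields a maximal chain whose set of flip-site $(2d+1)$-simplices is exactly the set of $(2d+1)$-simplices of $\mathcal{T}'$, and so represents $[G']$. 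By definition this replacement is an increasing elementary polygonal deformation. The converse is obtained by reversing this argument: given representatives related by such a deformation, the local analysis identifies the oriented polygon with the sub-poset for some $C(H,2d)$ with $|H|=2d+3$, and the associated triangulations then differ precisely by the increasing bistellar flip at the $(2d+2)$-simplex $|H|$.

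The main obstacle I expect is the structural identification in the second step: establishing that the sub-poset of $\mathcal{S}_{1}(n+2d+1,2d)$ corresponding to triangulations of $C(H,2d)$ really is an oriented polygon with chain lengths exactly $d+1$ and $d+2$. This requires both the facet enumeration from Gale's Evenness Criterion and a verification that the sub-poset $\mathcal{S}_{1}(2d+3,2d)$ has no covering relations beyond those organised into these two chains. A secondary technical point is verifying that bistellar flips with disjoint flip-site $(2d+1)$-simplices genuinely commute as moves in a maximal chain of $\mathcal{S}_{1}(n+2d+1,2d)$; this is folklore in the two-dimensional Tamari setting but needs justification in the higher-dimensional situation considered here.
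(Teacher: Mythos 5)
Your overall strategy is the same as the one used in the paper: reduce via Theorem~\ref{thm:max_green} to maximal chains in $\mathcal{S}_{1}(n+2d+1,2d)$, observe that the $(2d+2)$-simplex $|H|$ inducing the flip has $d+2$ lower and $d+1$ upper facets, and then show that the $d+2$ flips at the lower facets can be made consecutive in some representative chain, after which exchanging them for the $d+1$ flips at the upper facets is exactly an increasing elementary polygonal deformation; the converse is obtained by running this backwards. So the architecture is right.

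There is, however, one step where your stated justification would fail. You claim that a flip whose site lies inside $|H|$ commutes with any flip whose site lies outside $|H|$ ``since the two affect disjoint portions of the $(2d)$-dimensional triangulation''. This is not true: two $(2d+1)$-simplices of $\mathcal{T}$ that are adjacent in the triangulation share a $2d$-dimensional face, and a simplex $|B|$ of $\mathcal{T}$ not contained in $|H|$ can perfectly well share such a face with a lower facet $|A_i|$ of $|H|$ --- $B$ has $2d+2$ vertices, only one of which need lie outside $H$. It is precisely these shared $2d$-faces that impose precedence constraints on the order of flips in a maximal chain: the flip at $|A_i|$ must precede the flip at $|B|$ whenever an upper facet of $|A_i|_{2d+1}$ is a lower facet of $|B|_{2d+1}$. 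So the relevant flips do interact, and what actually has to be proved is not pairwise commutation of disjoint flips but that no flip-site of $\mathcal{T}$ outside $|H|$ is forced to occur strictly between two of the $d+2$ lower-facet flips, i.e.\ that the lower facets of $|H|$ form an interval in the precedence order on the $(2d+1)$-simplices of $\mathcal{T}$. You flag commutation as a point needing verification, but the diagnosis is off: the issue is not whether disjoint flips commute, it is that the flips in question are not disjoint in the first place. The remainder of your plan --- the identification of the local sub-poset with an oriented polygon of chain lengths $d+1$ and $d+2$, and the reversal of the argument for the converse --- matches what the paper does.
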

\begin{proof}[Sketch]
The main idea of the proof is as follows. Increasing bistellar flips of triangulations of $C(n + 2d + 1, 2d + 1)$ are induced by $(2d + 2)$-simplices. A $(2d + 2)$-simplex has $d + 2$ lower facets and $d + 1$ upper facets, each of which is a $(2d + 1)$-simplex. The $(2d + 1)$-simplices of a triangulation of $C(n + 2d + 1, 2d + 1)$ correspond to the increasing bistellar flips in the maximal chain in $\mathcal{S}_{1}(n + 2d + 1, 2d)$, which are the mutations in the $d$-maximal green sequence. Hence, what one needs to show for the forwards direction is that for every increasing bistellar flip of $\mathcal{T}$, one can find a corresponding maximal chain in $\mathcal{S}_{1}(n + 2d + 1, 2d)$ where all the lower facets of the $(2d + 2)$-simplex occur as consecutive mutations. For the backwards direction one needs to show that if one can replace a subchain of length $d + 2$ with a subchain of length $d + 1$ then these must correspond to the upper and lower facets of some $(2d + 2)$-simplex, which therefore gives an increasing bistellar flip.
\end{proof}

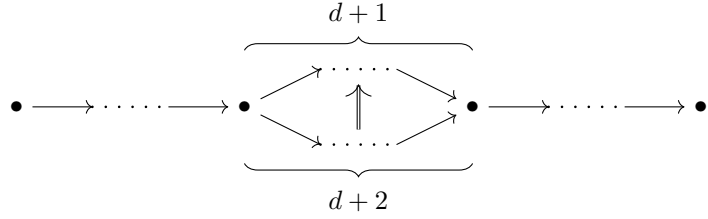
\begin{figure}
\caption{An increasing elementary polygonal deformation of $d$-maximal green sequences.}\label{fig:deformation}
\[\begin{tikzpicture}
\node (1) at (-1,0) {$\bullet$};
\coordinate (2f) at (0,0);
\coordinate (2s) at (1,0);
\node (3) at (2,0) {$\bullet$};
\coordinate (4u) at (3,0.5);
\coordinate (4d) at (3,-0.5);
\coordinate (5u) at (4,0.5);
\coordinate (5d) at (4,-0.5);
\node (6) at (5,0) {$\bullet$};
\coordinate (7f) at (6,0);
\coordinate (7s) at (7,0);
\node (8) at (8,0) {$\bullet$};


\draw[->] (1) -- (2f);
\draw[dot diameter=1pt, dot spacing=5pt, dots] (2f) -- (2s);
\draw[->] (2s) -- (3);
\draw[->] (3) -- (4u);
\draw[->] (3) -- (4d);
\draw[dot diameter=1pt, dot spacing=5pt, dots] (4u) -- (5u);
\draw[dot diameter=1pt, dot spacing=5pt, dots] (4d) -- (5d);
\draw[->] (5u) -- (6);
\draw[->] (5d) -- (6);
\draw[->] (6) -- (7f);
\draw[dot diameter=1pt, dot spacing=5pt, dots] (7f) -- (7s);
\draw[->] (7s) -- (8);


\draw[double, ->] (3.5, -0.3) -- (3.5, 0.3);


\draw[decorate,decoration={brace,amplitude=5pt}]
(2, 0.75) -- (5, 0.75);
\draw[decorate,decoration={brace,amplitude=5pt,mirror}]
(2, -0.75) -- (5, -0.75);


\node at (3.5, 1.25) {$d + 1$};
\node at (3.5, -1.25) {$d + 2$};
\end{tikzpicture}\]
\end{figure}

\begin{remark}
Basic cluster-tilting objects for $A_{n}^{d}$ are in bijection with triangulations of $C(n + 2d + 1, 2d)$ by \cite[Theorem 4.4]{ot}. By \cite[Theorem 3.4]{kv-poly}, the set of triangulations of $C(n + 2d + 1, 2d)$ forms an $n$-category, that is, a category enriched in $(n - 1)$-categories---where ordinary categories are 1-categories. Hence, one can impose the structure of an $n$-category on the set of (isomorphism classes of) basic cluster-tilting objects for $A_{n}^{d}$. Indeed, the irreducible 1-morphisms of this category are left mutations, and the irreducible 2-morphisms are the increasing elementary polygonal deformations from Theorem~\ref{thm:odd:first}.
\end{remark}

\subsection{Second order}\label{odd-second}

We now show how the second higher Stasheff--Tamari order may be interpreted in terms of equivalence classes of $d$-maximal green sequences.

\begin{theorem}\label{thm:odd:second}
Given two triangulations $\mathcal{T}, \mathcal{T}' \in \mathcal{S}(n+2d+1,2d+1)$ corresponding to equivalence classes of $d$-maximal green sequences $[G], [G'] \in \widetilde{\mathcal{MG}}(A_{n}^{d})$, then $\mathcal{T} \leqslant_{2} \mathcal{T}'$ if and only if $\Sigma(G) \supseteq  \Sigma(G')$.
\end{theorem}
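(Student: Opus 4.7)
I would follow the overarching strategy of Proposition~\ref{prop:sub<->perp} and Theorem~\ref{thm:even:second}, adapted to the odd-dimensional setting. By \cite[Proposition~2.15]{er} and the definition of the order, $\mathcal{T} \leqslant_{2} \mathcal{T}'$ if and only if $\mathrm{sub}_{d+1}(\mathcal{T}) \subseteq \mathrm{sub}_{d+1}(\mathcal{T}')$, so the goal reduces to matching the containment of submersion sets with the reverse containment of $\Sigma$.

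The main intermediate step I would prove is an odd-dimensional analogue of Proposition~\ref{prop:sub<->perp}: for any $(d+2)$-tuple $S \in [n+2d+1]^{d+2}$, the $(d+1)$-simplex $|S|$ is submerged by $\mathcal{T}$ if and only if there is no $A \in \Sigma(G)$ with the interleaving $s_0 < a_0 < s_1 < a_1 < \cdots < a_d < s_{d+1}$. This interleaved pair $(S,A)$ arises from the $(2d+3)$-tuple $K = S \cup A$: the $(2d+2)$-simplex $|K|$ in $C(n+2d+1, 2d+2)$ has $d+1$ upper facets whose common intersection is $S$ and $d+2$ lower facets whose common intersection is $A$, which is exactly the combinatorial data underlying the oriented polygons of Section~\ref{odd-first}.

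For the forward direction of the proposition, I would first characterise $\Sigma(G)$ intrinsically by showing that $M_A \in \Sigma(G)$ if and only if $|A|$ is a $d$-face of some $(2d+1)$-simplex of $\mathcal{T}$. This follows from the bijection of Theorem~\ref{thm:odd:first}: each mutation of $G$ corresponds to a $(2d+1)$-simplex of $\mathcal{T}$ whose lower and upper facet intersections realise the outgoing and incoming summands, while the summands of the initial $\Lambda$ and terminal $\Lambda[d]$ are the $d$-faces of the lower and upper boundary triangulations of $\mathcal{T}$. Once this is established, if $|A|$ is a face of $\mathcal{T}$ and $S$ is interleaved with $A$, the geometric interlocking of $|S|$ and $|A|$ inside $C(n+2d+1, 2d+1)$ forces $|S|$ to rise above the section $s_\mathcal{T}$ at some point, giving $|S| \notin \mathrm{sub}_{d+1}(\mathcal{T})$.

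The reverse direction of the proposition is the hardest part of the argument: given that $|A|$ is not a $d$-face of $\mathcal{T}$, one must produce an interleaved $S$ with $|S| \in \mathrm{sub}_{d+1}(\mathcal{T})$. The plan is to locate a circuit of $\mathcal{T}$ arising from the passage of $|A|$ through interior $(2d+1)$-simplices, in the same spirit as the combinatorial arguments of \cite{ot}, and extract the required $(2d+3)$-tuple $K$. Once the proposition is proved, the theorem follows quickly: if $\Sigma(G) \supseteq \Sigma(G')$ and $|S| \in \mathrm{sub}_{d+1}(\mathcal{T})$, then no interleaved $A \in \Sigma(G)$ exists and hence none in $\Sigma(G')$, so $|S| \in \mathrm{sub}_{d+1}(\mathcal{T}')$; conversely, if $\mathcal{T} \leqslant_2 \mathcal{T}'$ and $A \in \Sigma(G')$, then no $S$ interleaved with $A$ satisfies $|S| \in \mathrm{sub}_{d+1}(\mathcal{T}')$ and so the same holds for $\mathcal{T}$, yielding $A \in \Sigma(G)$.
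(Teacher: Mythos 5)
Your route is genuinely different from the paper's, and the difference matters. The paper does not work with submersion of $(d+1)$-simplices at all: it first establishes a new characterisation of $\leqslant_{2}$ as \emph{reverse} inclusion of \emph{supermersion} sets of $\lfloor \delta/2\rfloor$-simplices (supermersion being dual to submersion), precisely because for $\delta = 2d+1$ this drops the relevant dimension from $d+1$ to $d$. The payoff is that the $d$-supermersion set of $\mathcal{T}$ is simply the set of $d$-simplices of $\mathcal{T}$ (every $d$-simplex of $C(n+2d+1,2d+2)$ lies in a lower facet of that polytope), so the identification with $\Sigma(G)$, modulo the projectives and shifted projectives common to every $d$-maximal green sequence, is immediate and no interleaving analysis is needed. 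Your plan instead stays with $\mathrm{sub}_{d+1}$ and buys the theorem through a circuit duality between $(d+1)$-simplices and $d$-faces. The individual claims you invoke are true---the interleaved pair $(S,A)$ does span a $(2d+2)$-simplex whose upper facets intersect in $|S|$ and whose lower facets intersect in $|A|$, so the two simplices cross in a point at which $|S|$ sits strictly above $|A|$---but the route is substantially heavier than the one the paper takes.

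There is also a logical slip in how you assemble the pieces. Your stated proposition, quantified over $S$, reads: $|S|$ is submerged by $\mathcal{T}$ if and only if no interleaved $A$ lies in $\Sigma(G)$. Its genuine reverse direction is ``if $|S|$ is not submerged, then some $d$-face of $\mathcal{T}$ interleaves below it''; you use this (to pass from ``no interleaved $A \in \Sigma(G')$'' to ``$|S| \in \mathrm{sub}_{d+1}(\mathcal{T}')$''), but you never discuss proving it. What you call ``the reverse direction'' is a different converse, quantified over $A$: ``if $|A|$ is not a $d$-face of $\mathcal{T}$, then some interleaved $|S|$ is submerged by $\mathcal{T}$''. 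Both converses are needed for the two implications of the theorem, neither follows from the other, and each requires its own geometric argument: the first must extract, from a point where $|S|$ rises above $s_{\mathcal{T}}$, a $d$-face of $\mathcal{T}$ passing beneath $|S|$ in the forced alternating pattern; the second must show that a non-face internal $d$-simplex necessarily crosses some $(d+1)$-face of $\mathcal{T}$ transversally. As written, one of the two is silently assumed, and this double converse is exactly what the paper's supermersion detour is designed to avoid.
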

\begin{proof}[Sketch]
This theorem requires another interpretation of the second higher Stasheff--Tamari order. We show that the second higher Stasheff--Tamari order is equivalent to reverse inclusion of supermersion sets of $\floor{\delta/2}$-simplices, where supermersion is defined dually to submersion. Note that this allows us to consider lower-dimensional simplices than submersion, which requires $\ceil{\delta/2}$-simplices. One can show that a $d$-supermersion set of a triangulation $\mathcal{T}$ of $C(n + 2d + 1, 2d + 1)$ is given by the $d$-simplices of the triangulation, since all $d$-simplices in $C(n + 2d + 1, 2d + 2)$ lie in the lower facets of the cyclic polytope. It suffices to consider the internal $d$-simplices of the triangulation, since $d$-simplices in the boundary facets belong to every triangulation. One can then show that internal $d$-simplices of $\mathcal{T}$ and $\mathcal{T}'$ correspond to the objects of $\Sigma(G)$ and $\Sigma(G')$ which are neither projectives nor shifted projectives. Since the projectives and shifted projectives are summands of every $d$-maximal green sequence, the supermersion set of $\mathcal{T}$ contains the supermersion set of $\mathcal{T}'$ if and only if $\Sigma(G) \supseteq \Sigma(G')$.
\end{proof}

\begin{remark}
The conjecture that the higher Stasheff--Tamari orders are equal in the odd-dimensional cases amounts to the existence of a series of increasing elementary polygonal deformations from $[G]$ to $[G']$ whenever we have $\Sigma(G) \supseteq \Sigma(G')$. This is similar to the ``no-gap'' conjecture of Br\"ustle, Dupont, and Perotin, which states that the set of lengths of maximal green sequences of an algebra has no gaps in it: if there is a series of increasing elementary polygonal deformations from $[G]$ to $[G']$, then there exists a $d$-maximal green sequence of every length between that of $G$ and that of $G'$. Cases of the no-gap conjecture were proven in \cite{g-mc,hi-no-gap} by showing how the set of maximal green sequences was connected by deformations across polygons. The difference here is that maximal green sequences connected by deformations across squares are always equivalent in our sense, and that we take account of the direction of deformations across other polygons.
\end{remark}

Since it is known for dimension 3 that the higher Stasheff--Tamari orders are equal and are lattices \cite[Theorem 4.9 and Theorem 4.10]{er}, we obtain the following corollary.

\begin{corollary}\label{cor-a-green-lat}
The two orders on $\widetilde{\mathcal{MG}}_{1}(A_{n})$ from Theorem~\ref{thm:odd:first} and Theorem~\ref{thm:odd:second} are equal and are lattices.
\end{corollary}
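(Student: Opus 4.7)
The plan is to transport the result directly from the combinatorial side via the bijections already established. By Theorem~\ref{thm:max_green} applied with $d = 1$, the set $\widetilde{\mathcal{MG}}_{1}(A_{n})$ is in bijection with the set $\mathcal{S}(n + 3, 3)$ of triangulations of the $3$-dimensional cyclic polytope $C(n+3, 3)$. Theorem~\ref{thm:odd:first} and Theorem~\ref{thm:odd:second} then tell us that, under this bijection, the two orders on equivalence classes of $1$-maximal green sequences described in those theorems are identified with $\leqslant_{1}$ and $\leqslant_{2}$ on $\mathcal{S}(n + 3, 3)$ respectively.

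Once these identifications are in hand, the corollary is an immediate consequence of the dimension-$3$ case of the Edelman--Reiner theorem, namely \cite[Theorem~4.9 and Theorem~4.10]{er}, which states precisely that $\mathcal{S}_{1}(n + 3, 3) = \mathcal{S}_{2}(n + 3, 3)$ and that this common poset is a lattice. Transporting equality and the lattice structure back across the bijection gives the required statement for $\widetilde{\mathcal{MG}}_{1}(A_{n})$.

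There is essentially no obstacle here beyond ensuring that the bijection of Theorem~\ref{thm:max_green} and the order identifications of Theorems~\ref{thm:odd:first} and \ref{thm:odd:second} are indeed specialising correctly to $d = 1$ and $\Lambda = A_{n}$; the genuine combinatorial content (equality of the orders and the lattice property in dimension three) is imported wholesale from \cite{er}. Thus the proof is a one-line invocation of the previously stated theorems together with the cited results of Edelman and Reiner.
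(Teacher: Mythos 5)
Your proposal is correct and matches the paper's argument exactly: the corollary is deduced by transporting the dimension-three Edelman--Reiner results \cite[Theorem~4.9 and Theorem~4.10]{er} across the bijection of Theorem~\ref{thm:max_green} and the order identifications of Theorems~\ref{thm:odd:first} and~\ref{thm:odd:second}, specialised to $d = 1$.
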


\begin{remark}
In independent work, \cite{gorsky_phd,gorsky_note,gorsky_forthcoming} Gorsky defines two orders on the set of equivalence classes of maximal green sequences of a Dynkin quiver, using a different framework, and proves that they are the same. For type $A$ quivers, these coincide with the two higher Stasheff--Tamari orders considered here in dimension 3.
\end{remark}

\begin{remark}\label{ex-counter-green-lat}
It is not in general true that the set of equivalence classes of maximal green sequences of a finite-dimensional algebra is a lattice under the orders we consider here. For example, the preprojective algebra of $A_{2}$ only has two maximal green sequences. These are not equivalent to each other, and nor are they related by either of the orders described above.

One might wonder whether the set of equivalence classes of maximal green sequences is a lattice for other hereditary algebras. However, computer calculations reveal that the set of equivalence classes of maximal green sequences of the path algebra of Dynkin type $D_{4}$ is not a lattice.
\end{remark}

\begin{example}
We illustrate Theorem~\ref{thm:max_green}, Theorem~\ref{thm:odd:first}, and Theorem~\ref{thm:odd:second} with the example $n = 2$, $d = 1$. The Auslander--Reiten quiver of the category $\clus_{A_{2}}$ is shown in Figure~\ref{fig:clus_ar_quiv}. There exist five cluster-tilting objects in $\clus_{A_{2}}$, which correspond to triangulations of $C(5, 2)$, as shown in Figure~\ref{fig:2d_triangs}. By Theorem~\ref{thm:max_green}, the two maximal green sequences formed from these cluster-tilting objects correspond to the two possible triangulations of $C(5, 3)$, as shown in Figure~\ref{fig:A2_mgs}. Note here how the mutations in each maximal green sequence correspond to the 3-simplices in the three-dimensional triangulation. Let the longer maximal green sequence here be $G$ and the shorter maximal green sequence be $G'$, with $\mathcal{T}$ and $\mathcal{T}'$ the corresponding triangulations. Then it can be seen from the figure that $G'$ is an increasing elementary polygonal deformation of $G$, which, by Theorem~\ref{thm:odd:first}, corresponds to the fact that $\mathcal{T}'$ is an increasing bistellar flip of $\mathcal{T}$. Moreover,
\begin{align*}
\Sigma(G) &= \left\{ \tcs{1},\, \tcs{2\\1},\, \tcs{2},\, \tcs{1}[1],\, \tcs{2\\1}[1] \right\},\\
\Sigma(G) &= \left\{ \tcs{1},\, \tcs{2\\1},\, \tcs{1}[1],\, \tcs{2\\1}[1] \right\},
\end{align*}
so that $\Sigma(G) \supseteq \Sigma(G')$. By Theorem~\ref{thm:odd:second}, we have that $\mathcal{T} \leqslant_{2} \mathcal{T}'$.

\begin{figure}
\caption{Maximal green sequences of $A_{2}$.}\label{fig:A2_mgs}
\vspace{1cm}
\[
\begin{tikzpicture}

\begin{scope}[shift={(0.2,0)},scale=0.9]



\begin{scope}[shift={(-3,0)}]


\coordinate(1) at (-2,2);
\coordinate(2) at (-1,0.5);
\coordinate(3) at (0,0);
\coordinate(4) at (1,0.5);
\coordinate(5) at (2,2);


\node(v1) at (1) {$\bullet$};
\node(v2) at (2) {$\bullet$};
\node(v3) at (3) {$\bullet$};
\node(v4) at (4) {$\bullet$};
\node(v5) at (5) {$\bullet$};


\node(mod2) at (0.5*-2 + 0.5*0, 0.5*2 + 0.5*0) {\color{red}$\tcs{1}$};
\node(mod12) at (0.5*-2 + 0.5*1, 0.5*2 + 0.5*0.5) {\color{red}$\tcs{2\\1}$};


\draw (1) -- (2) -- (3) -- (4) -- (5) -- (1);
\draw[blue] (1) -- (mod2) -- (3);
\draw[blue] (1) -- (mod12) -- (4);


\draw[ultra thick, OliveGreen, ->] (0,-0.5) -- (0,-1.5);

\end{scope}


\begin{scope}[shift={(-4,-1.5)}, scale=0.7]


\coordinate(1a) at (-2.75,2.25);
\coordinate(2a) at (-1.75,0.75);
\coordinate(3a) at (-0.75,0.25);
\coordinate(4a) at (0.25,0.75);


\draw (1a) --(4a);
\draw (1a) -- (2a);
\draw (2a) -- (3a);
\draw (3a) -- (4a);
\draw (2a) -- (4a);
\draw[dotted] (1a) -- (3a); 


\node at (1a) {$\bullet$};
\node at (2a) {$\bullet$};
\node at (3a) {$\bullet$};
\node at (4a) {$\bullet$};

\end{scope}


\begin{scope}[shift={(-3,-4)}]


\coordinate(1) at (-2,2);
\coordinate(2) at (-1,0.5);
\coordinate(3) at (0,0);
\coordinate(4) at (1,0.5);
\coordinate(5) at (2,2);


\node(v1) at (1) {$\bullet$};
\node(v2) at (2) {$\bullet$};
\node(v3) at (3) {$\bullet$};
\node(v4) at (4) {$\bullet$};
\node(v5) at (5) {$\bullet$};


\node(mod1) at (0.5*-1 + 0.5*1, 0.5*0.5 + 0.5*0.5) {\color{red}$\tcs{2}$};
\node(mod12) at (0.5*-2 + 0.5*1, 0.5*2 + 0.5*0.5) {\color{red}$\tcs{2\\1}$};


\draw (1) -- (2) -- (3) -- (4) -- (5) -- (1);
\draw[blue] (2) -- (mod1) -- (4);
\draw[blue] (1) -- (mod12) -- (4);


\draw[ultra thick, OliveGreen, ->] (0,-0.5) -- (0,-1.5);

\end{scope}


\begin{scope}[shift={(-5,-6)}, scale=0.7]


\coordinate(1b) at (-1.75,2.5);
\coordinate(2b) at (-0.75,1);
\coordinate(4b) at (1.15,1);
\coordinate(5b) at (2.25,2.5);


\draw[fill=gray!30, draw=none] (1b) -- (2b) -- (5b) -- (1b);


\draw (1b) -- (5b);
\draw (1b) -- (2b);
\draw (2b) -- (4b);
\draw (4b) -- (5b);
\draw (2b) -- (5b);
\draw[dotted] (1b) -- (4b);


\node at (1b) {$\bullet$};
\node at (2b) {$\bullet$};
\node at (4b) {$\bullet$};
\node at (5b) {$\bullet$};

\end{scope}


\begin{scope}[shift={(-3,-8)}]


\coordinate(1) at (-2,2);
\coordinate(2) at (-1,0.5);
\coordinate(3) at (0,0);
\coordinate(4) at (1,0.5);
\coordinate(5) at (2,2);


\node(v1) at (1) {$\bullet$};
\node(v2) at (2) {$\bullet$};
\node(v3) at (3) {$\bullet$};
\node(v4) at (4) {$\bullet$};
\node(v5) at (5) {$\bullet$};


\node(mod1) at (0.5*-1 + 0.5*1, 0.5*0.5 + 0.5*0.5) {\color{red}$\tcs{2}$};
\node(mod21) at (0.5*-1 + 0.5*2, 0.5*0.5 + 0.5*2) {\color{red}$\tcs{1}[1]$};


\draw (1) -- (2) -- (3) -- (4) -- (5) -- (1);
\draw[blue] (2) -- (mod1) -- (4);
\draw[blue] (2) -- (mod21) -- (5);


\draw[ultra thick, OliveGreen, ->] (0,-0.5) -- (0,-1.5);

\end{scope}


\begin{scope}[shift={(-6,-9.5)}, scale=0.7]


\coordinate(2c) at (-0.25,0.25);
\coordinate(3c) at (0.75,-0.25);
\coordinate(4c) at (1.75,0.25);
\coordinate(5c) at (2.75,1.75);


\draw[fill=gray!30, draw=none] (2c) -- (5c) -- (3c) -- (2c);
\draw[fill=gray!30, draw=none] (3c) -- (4c) -- (5c) -- (3c);


\draw (2c) -- (5c);
\draw (2c) -- (3c);
\draw (3c) -- (4c);
\draw (4c) -- (5c);
\draw (3c) -- (5c);
\draw[dotted] (2c) -- (4c);


\node at (2c) {$\bullet$};
\node at (3c) {$\bullet$};
\node at (4c) {$\bullet$};
\node at (5c) {$\bullet$};

\end{scope}


\begin{scope}[shift={(-3,-12)}]


\coordinate(1) at (-2,2);
\coordinate(2) at (-1,0.5);
\coordinate(3) at (0,0);
\coordinate(4) at (1,0.5);
\coordinate(5) at (2,2);


\node(v1) at (1) {$\bullet$};
\node(v2) at (2) {$\bullet$};
\node(v3) at (3) {$\bullet$};
\node(v4) at (4) {$\bullet$};
\node(v5) at (5) {$\bullet$};


\node(mod21) at (0.7*-1 + 0.3*2, 0.7*0.5 + 0.3*2) {\color{red}$\tcs{1}[1]$};
\node(mod121) at (0.4*2 + 0.6*0, 0.4*2 + 0.6*0) {\color{red}$\tcs{2\\1}[1]$};


\draw (1) -- (2) -- (3) -- (4) -- (5) -- (1);
\draw[blue] (2) -- (mod21) -- (5);
\draw[blue] (3) -- (mod121) -- (5);

\end{scope}


\begin{scope}[shift={(-3,-15)}]


\coordinate(1a) at (-2.77,2.25);
\coordinate(2a) at (-1.77,0.75);
\coordinate(3a) at (-0.77,0.25);
\coordinate(4a) at (0.23,0.75);

\coordinate(1b) at (-1.75,2.5);
\coordinate(2b) at (-0.75,1);
\coordinate(4b) at (1.15,1);
\coordinate(5b) at (2.25,2.5);

\coordinate(2c) at (-0.25,0.25);
\coordinate(3c) at (0.75,-0.25);
\coordinate(4c) at (1.75,0.25);
\coordinate(5c) at (2.75,1.75);


\draw[fill=gray!30, draw=none] (1b) -- (2b) -- (5b) -- (1b);
\draw[fill=gray!30, draw=none] (2c) -- (5c) -- (3c) -- (2c);
\draw[fill=gray!30, draw=none] (3c) -- (4c) -- (5c) -- (3c);



\path[name path = line 1] (1a) -- (4a);


\path[name path = line 2] (1b) -- (2b);
\path[name path = line 3] (2b) -- (4b);


\path [name intersections={of = line 1 and line 2}];
\coordinate (a)  at (intersection-1);

\path [name intersections={of = line 1 and line 3}];
\coordinate (b)  at (intersection-1);


\draw (1a) -- (a);
\draw[dotted] (a) -- (b);
\draw (b) -- (4a);


\draw (1a) -- (2a);
\draw (2a) -- (3a);
\draw (3a) -- (4a);
\draw (2a) -- (4a);
\draw[dotted] (1a) -- (3a); 


\draw (1b) -- (5b);
\draw (1b) -- (2b);
\draw (2b) -- (4b);
\draw (4b) -- (5b);
\draw (2b) -- (5b);
\draw[dotted] (1b) -- (4b);


\draw (2c) -- (5c);
\draw (2c) -- (3c);
\draw (3c) -- (4c);
\draw (4c) -- (5c);
\draw (3c) -- (5c);
\draw[dotted] (2c) -- (4c);


\node at (1a) {$\bullet$};
\node at (2a) {$\bullet$};
\node at (3a) {$\bullet$};
\node at (4a) {$\bullet$};

\node at (1b) {$\bullet$};
\node at (2b) {$\bullet$};
\node at (4b) {$\bullet$};
\node at (5b) {$\bullet$};

\node at (2c) {$\bullet$};
\node at (3c) {$\bullet$};
\node at (4c) {$\bullet$};
\node at (5c) {$\bullet$};

\end{scope}

\end{scope}


\begin{scope}[shift={(-0.2,0)},scale=0.9]



\begin{scope}[shift={(3,0)}]


\coordinate(1) at (-2,2);
\coordinate(2) at (-1,0.5);
\coordinate(3) at (0,0);
\coordinate(4) at (1,0.5);
\coordinate(5) at (2,2);


\node(v1) at (1) {$\bullet$};
\node(v2) at (2) {$\bullet$};
\node(v3) at (3) {$\bullet$};
\node(v4) at (4) {$\bullet$};
\node(v5) at (5) {$\bullet$};


\node(mod2) at (0.5*-2 + 0.5*0, 0.5*2 + 0.5*0) {\color{red}$\tcs{1}$};
\node(mod12) at (0.5*-2 + 0.5*1, 0.5*2 + 0.5*0.5) {\color{red}$\tcs{2\\1}$};


\draw (1) -- (2) -- (3) -- (4) -- (5) -- (1);
\draw[blue] (1) -- (mod2) -- (3);
\draw[blue] (1) -- (mod12) -- (4);


\draw[ultra thick, OliveGreen, ->] (0,-0.5) -- (0,-3.5);

\end{scope}


\begin{scope}[shift={(4.5,-3)}, scale=0.7]


\coordinate(31b) at (-1.5,2.25);
\coordinate(33b) at (0.5,0.25);
\coordinate(34b) at (1.5,0.75);
\coordinate(35b) at (2.5,2.25);


\draw[fill=gray!30, draw=none] (33b) -- (34b) -- (35b) -- (33b);


\draw (31b) --(33b);
\draw (33b) -- (34b);
\draw (34b) -- (35b);
\draw (35b) -- (31b);
\draw (33b) -- (35b);
\draw[dotted] (31b) -- (34b);


\node at (31b) {$\bullet$};
\node at (33b) {$\bullet$};
\node at (34b) {$\bullet$};
\node at (35b) {$\bullet$};

\end{scope}


\begin{scope}[shift={(3,-6)}]


\coordinate(1) at (-2,2);
\coordinate(2) at (-1,0.5);
\coordinate(3) at (0,0);
\coordinate(4) at (1,0.5);
\coordinate(5) at (2,2);


\node(v1) at (1) {$\bullet$};
\node(v2) at (2) {$\bullet$};
\node(v3) at (3) {$\bullet$};
\node(v4) at (4) {$\bullet$};
\node(v5) at (5) {$\bullet$};


\node(mod2) at (0.5*-2 + 0.5*0, 0.5*2 + 0.5*0) {\color{red}$\tcs{1}$};
\node(mod121) at (0.4*2 + 0.6*0, 0.4*2 + 0.6*0) {\color{red}$\tcs{2\\1}[1]$};


\draw (1) -- (2) -- (3) -- (4) -- (5) -- (1);
\draw[blue] (1) -- (mod2) -- (3);
\draw[blue] (3) -- (mod121) -- (5);


\draw[ultra thick, OliveGreen, ->] (0,-0.5) -- (0,-3.5);

\end{scope}


\begin{scope}[shift={(5.5,-9)}, scale=0.7]


\coordinate(31f) at (-2.5,1.75);
\coordinate(32f) at (-1.5,0.2);
\coordinate(33f) at (-0.5,-0.25);
\coordinate(35f) at (1.5,1.75);


\draw[fill=gray!30, draw=none] (31f) -- (32f) -- (35f) -- (31f);
\draw[fill=gray!30, draw=none] (32f) -- (33f) -- (35f) -- (32f);


\draw (31f) -- (32f);
\draw (32f) -- (33f);
\draw (33f) -- (35f);
\draw (35f) -- (31f);
\draw (32f) -- (35f);
\draw[dotted] (31f) -- (33f); 


\node at (31f) {$\bullet$};
\node at (32f) {$\bullet$};
\node at (33f) {$\bullet$};
\node at (35f) {$\bullet$};

\end{scope}


\begin{scope}[shift={(3,-12)}]


\coordinate(1) at (-2,2);
\coordinate(2) at (-1,0.5);
\coordinate(3) at (0,0);
\coordinate(4) at (1,0.5);
\coordinate(5) at (2,2);


\node(v1) at (1) {$\bullet$};
\node(v2) at (2) {$\bullet$};
\node(v3) at (3) {$\bullet$};
\node(v4) at (4) {$\bullet$};
\node(v5) at (5) {$\bullet$};


\node(mod21) at (0.7*-1 + 0.3*2, 0.7*0.5 + 0.3*2) {\color{red}$\tcs{1}[1]$};
\node(mod121) at (0.4*2 + 0.6*0, 0.4*2 + 0.6*0) {\color{red}$\tcs{2\\1}[1]$};


\draw (1) -- (2) -- (3) -- (4) -- (5) -- (1);
\draw[blue] (2) -- (mod21) -- (5);
\draw[blue] (3) -- (mod121) -- (5);

\end{scope}


\begin{scope}[shift={(3,-15)}]


\coordinate(31f) at (-2.5,1.75);
\coordinate(32f) at (-1.5,0.2);
\coordinate(33f) at (-0.5,-0.25);
\coordinate(35f) at (1.5,1.75);

\coordinate(31b) at (-1.5,2.25);
\coordinate(33b) at (0.5,0.25);
\coordinate(34b) at (1.5,0.75);
\coordinate(35b) at (2.5,2.25);



\path[name path = line 2] (33f) -- (35f);
\path[name path = line 1] (35f) -- (31f);


\path[name path = line 3] (31b) -- (33b);


\path [name intersections={of = line 1 and line 3}];
\coordinate (a)  at (intersection-1);

\path [name intersections={of = line 2 and line 3}];
\coordinate (b)  at (intersection-1);


\draw[fill=gray!30, draw=none] (31f) -- (32f) -- (35f) -- (31f);
\draw[fill=gray!30, draw=none] (32f) -- (33f) -- (35f) -- (32f);
\draw[fill=gray!30, draw=none] (33b) -- (34b) -- (35b) -- (33b);



\draw (31b) -- (a);
\draw[dotted] (a) -- (b);
\draw (b) -- (33b);


\draw (31f) -- (32f);
\draw (32f) -- (33f);
\draw (33f) -- (35f);
\draw (35f) -- (31f);
\draw (32f) -- (35f);
\draw[dotted] (31f) -- (33f); 


\draw (33b) -- (34b);
\draw (34b) -- (35b);
\draw (35b) -- (31b);
\draw (33b) -- (35b);
\draw[dotted] (31b) -- (34b);


\node at (31f) {$\bullet$};
\node at (32f) {$\bullet$};
\node at (33f) {$\bullet$};
\node at (35f) {$\bullet$};
\node at (31b) {$\bullet$};
\node at (33b) {$\bullet$};
\node at (34b) {$\bullet$};
\node at (35b) {$\bullet$};

\end{scope}

\end{scope}

\end{tikzpicture}
\]
\end{figure}
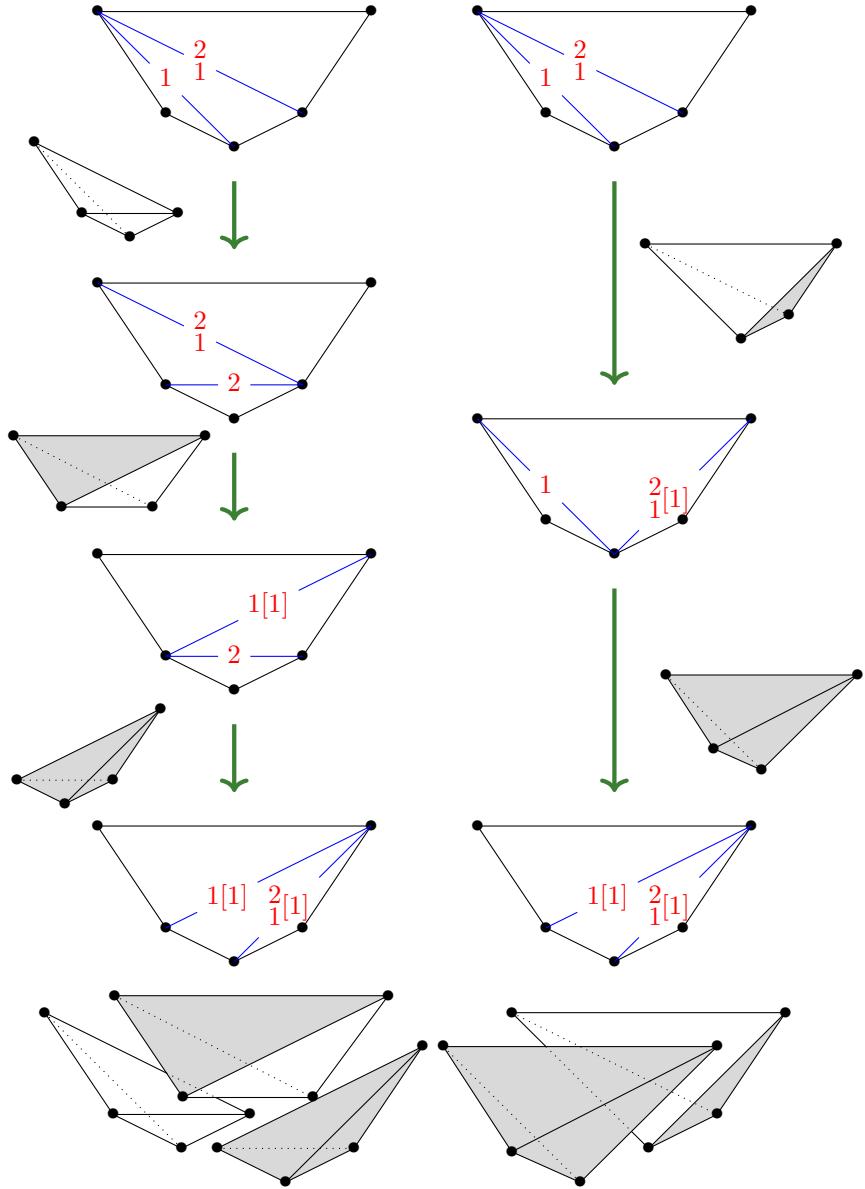
\end{example}

\printbibliography

\end{document}